\documentclass[graybox]{svmult}

\smartqed
\usepackage{mathptmx}       
\usepackage{helvet}         
\usepackage{courier}        
\usepackage{type1cm}        
\usepackage{graphicx}       

\usepackage{array,colortbl}
\usepackage{amsmath,amsfonts,amssymb,bm} 

%
%
\newcommand{\R}{\mathbb{R}} 
\newcommand{\N}{\mathbb{N}} 



\spnewtheorem{assumption}{Assumption}{\bfseries}{}



\usepackage{microtype} 

\usepackage[colorlinks=true,linkcolor=black,citecolor=black,urlcolor=black]{hyperref}
\urlstyle{same}
\usepackage{bookmark}
\pdfstringdefDisableCommands{\def\and{, }}
\makeatletter 
  \providecommand*{\toclevel@author}{999}
  \providecommand*{\toclevel@title}{0}
\makeatother

\begin{document}

\title*{A Note on the Importance of Weak Convergence Rates for SPDE Approximations in Multilevel Monte Carlo Schemes}
\titlerunning{Weak Convergence of SPDE Approximations and MLMC} 
\author{Annika Lang}
\institute{
Annika Lang 
\at Department of Mathematical Sciences, Chalmers University of Technology \& University of Gothenburg,
SE--412 96 G\"oteborg, Sweden,
\email{annika.lang@chalmers.se}
}

\maketitle

\abstract{It is a well-known rule of thumb that approximations of stochastic partial differential equations have essentially twice the order of weak convergence compared to the corresponding order of strong convergence. This is already known for many approximations of stochastic (ordinary) differential equations while it is recent research for stochastic partial differential equations. In this note it is shown how the availability of weak convergence results influences the number of samples in multilevel Monte Carlo schemes and therefore reduces the computational complexity of these schemes for a given accuracy of the approximations.
}

\section{Introduction}\label{sec:Intro}

Since the publication of Giles' articles about multilevel Monte Carlo methods~\cite{G06,G08}, which applied an earlier idea of Heinrich~\cite{H01} to stochastic differential equations, an enormous amount of literature on the application of multilevel Monte Carlo schemes to various applications has been published. For an overview of the state of the art in the area, the reader is referred to the scientific program and the proceedings of MCQMC14 in Leuven.

This note is intended to show the consequences of the availability of different types of convergence results for stochastic partial differential equations of It\^o type (SPDEs for short in what follows). Here we consider so called strong and weak convergence rates, where a sequence of approximations $(Y_\ell, \ell \in \N_0)$ of a $H$-valued random variable~$Y$ converges strongly (also called in mean square) to~$Y$ if
  \begin{equation*}
   \lim_{\ell \rightarrow +\infty} \mathbb{E}[\|Y - Y_\ell\|_H^2]^{1/2} = 0.
  \end{equation*}
In the context of this note, $H$ denotes a separable Hilbert space. The sequence is said to converge weakly to~$Y$ if
  \begin{equation*}
    \lim_{\ell \rightarrow +\infty} |\mathbb{E}[\varphi(Y)] - \mathbb{E}[\varphi(Y_\ell)]| = 0
  \end{equation*}
for $\varphi$ in an appropriately chosen class of functionals that depends in general on the treated problem.
While strong convergence results for approximations of many SPDEs are already well-known, corresponding orders of weak convergence that are better than the strong ones are just rarely available. For an overview on the existing literature on weak convergence, the reader is referred to~\cite{AKL15,JK15} and the literature therein. The necessity to do further research in this area is besides other motivations also due to the efficiency of multilevel Monte Carlo approximations, which is the content of this note. By a rule of thumb one expects the order of weak convergence to be twice the strong one for SPDEs. This is shown under certain smoothness assumptions on the SPDE and its approximation in~\cite{AKL15}. We use the SPDE from~\cite{AKL15} and its approximations with the desired strong and weak convergence rates to show that the additional knowledge of better weak than strong convergence rates changes the choices of the number of samples per level in a multilevel Monte Carlo approximation according to the theory. Since, for a given accuracy, the number of samples reduces with the availability of weak rates, the overall computational work decreases. Computing numbers, we shall see in the end that for high dimensional problems and low regularity of the original SPDE the work using only strong approximation results is essentially the squared work using also weak approximation rates. In other words the order of the complexity of the work in terms of accuracy decreases essentially by a factor of~$2$, when weak convergence rates are available. The intention of this note is to point out this important fact by writing down the resulting numbers explicitly. First simulation results are presented in the end for a stochastic heat equation in one dimension driven by additive space-time white noise, which, to the best of my knowledge, are the first simulation results of that type in the literature. The obtained results confirm the theory.

This work is organized as follows: In Section~\ref{sec:MLMC} the multilevel Monte Carlo method is recalled including results for the approximation of Hilbert-space-valued random variables on arbitrary refinements. SPDEs and their approximations are introduced in Section~\ref{sec:SPDEs} and results for strong and weak errors from~\cite{AKL15} are summarized. The results from Sections~\ref{sec:MLMC} and~\ref{sec:SPDEs} are combined in Section~\ref{sec:SPDE-MLMC} to a multilevel Monte Carlo scheme for SPDEs and the consequences of the knowledge of weak convergence rates are outlined. Finally, the theory is confirmed by simulations in Section~\ref{sec:Simulations}.

\section{Multilevel Monte Carlo for Random Variables}\label{sec:MLMC}

In this section we recall and improve a convergence and a work versus accuracy result for the multilevel Monte Carlo estimator of a Hilbert-space-valued random variable from~\cite{BL12_2}. This is used to calculate errors and computational work for the approximation of stochastic partial differential equations in Section~\ref{sec:SPDE-MLMC}. A multilevel Monte Carlo method for (more general) Banach-space-valued random variables has been introduced in~\cite{H01}, where the author derives bounds on the error for given work. Here, we do the contrary and bound the overall work for a given accuracy.

We start with a lemma on the convergence in the number of samples of a Monte Carlo estimator. Therefore let $(\Omega, \mathcal{A}, P)$ be a probability space and let $Y$ be a random variable with values in a Hilbert space~$(B,(\cdot,\cdot)_B)$ and $(\hat{Y}^i, i \in \N)$ be a sequence of independent, identically distributed copies of~$Y$. Then the strong law of large numbers states that the \emph{Monte Carlo estimator $E_N[Y]$} defined by
\begin{equation*}
 E_N[Y] := \frac{1}{N} \sum_{i=1}^N \hat{Y}^i
\end{equation*}
converges $P$-almost surely to $\mathbb{E}[Y]$ for $N \rightarrow +\infty$. In the following lemma we see that it also converges in mean square to $\mathbb{E}[Y]$ if $Y$ is square integrable, i.e., $Y \in L^2(\Omega;B)$ with
\begin{equation*}
L^2(\Omega;B)
:=
\left\{
v:\Omega \rightarrow B, \, v \text{ strongly measurable}, \,
\| v \|_{L^2(\Omega;B)} < +\infty
\right\} ,
\end{equation*}
where
\begin{equation*}
\| v \|_{L^2(\Omega;B)}
:=
\mathbb{E} [ \|v\|_B^2]^{1/2} .
\end{equation*}
In contrast to the almost sure convergence of $E_N[Y]$ derived from the strong law of large numbers, a convergence rate in mean square can be deduced from the following lemma in terms of the number of samples $N \in \N$.
\begin{lemma}\label{lem:MCerr}
For any $N\in \N$ 
and for $Y\in L^2(\Omega; B)$, it holds that
\begin{equation*}
\| \mathbb{E}[Y] - E_N[Y] \|_{L^2(\Omega;B)}
  = \frac{1}{\sqrt{N}} \, {\mathsf{Var}}[Y]^{1/2}
  \leq \frac{1}{\sqrt{N}} \, \| Y \|_{L^2(\Omega;B)}.
\end{equation*}
\end{lemma}
The lemma is proven in, e.g., \cite[Lemma~4.1]{BLS13}. It shows that the sequence of so-called \emph{Monte Carlo estimators} $(E_N[Y], N \in \N)$ converges with rate $\operatorname{O}(N^{-1/2})$ in mean square to the expectation of~$Y$.

Next let us assume that $(Y_\ell, \ell \in \N_0)$ is a sequence of approximations of~$Y$, e.g., $Y_\ell \in V_\ell$, where $(V_\ell, \ell \in \N_0)$ is a sequence of finite dimensional subspaces of~$B$. For given $L \in \N_0$, it holds that
\begin{equation*}
 Y_L = Y_0 + \sum_{\ell = 1}^L (Y_\ell - Y_{\ell - 1})
\end{equation*}
and due to the linearity of the expectation that
\begin{equation*}
  \mathbb{E}[Y_L] = \mathbb{E}[Y_0] + \sum_{\ell = 1}^L \mathbb{E}[Y_\ell - Y_{\ell - 1}].
\end{equation*}
A possible way to approximate $\mathbb{E}[Y_L]$ is to approximate $\mathbb{E}[Y_\ell - Y_{\ell - 1}]$ with the corresponding Monte Carlo estimator $E_{N_\ell}[Y_\ell - Y_{\ell - 1}]$ with a number of independent samples~$N_\ell$ depending on the level~$\ell$. We set
\begin{equation*}
 E^L[Y_L]
    := E_{N_0}[Y_0] + \sum_{\ell=1}^L E_{N_\ell}[Y_\ell - Y_{\ell - 1}]
\end{equation*}
and call $E^L[Y_L]$ the \emph{multilevel Monte Carlo estimator of $\mathbb{E}[Y_L]$}. The following lemma gives convergence results for the estimator depending on the order of weak convergence of $(Y_\ell, \ell \in \N_0)$ to~$Y$ and the convergence of the variance of $(Y_\ell - Y_{\ell - 1},\ell \in \N)$. If neither estimates on weak convergence rates nor on the convergence of the variances are available, one can use --- the in general slower --- strong convergence rates.

\begin{lemma}\label{lem:MLMCerr}
 Let $Y \in L^2(\Omega;B)$ and let $(Y_\ell, \ell \in \N_0)$ be a sequence in $L^2(\Omega;B)$, then, for $L \in \N_0$, it holds that
\begin{align*}
 \|\mathbb{E}[Y] & - E^L[Y_L]\|_{L^2(\Omega;B)}\\
    & \le \| \mathbb{E}[Y - Y_L]\|_B
	  + \|\mathbb{E}[Y_L] - E^L[Y_L]\|_{L^2(\Omega;B)}\\
    & = \| \mathbb{E}[Y - Y_L]\|_B
	  + \left(
	  N_0^{-1} {\mathsf{Var}}[Y_0]
	  + \sum_{\ell=1}^L N_\ell^{-1} {\mathsf{Var}}[Y_\ell - Y_{\ell -1}]
	  \right)^{1/2}\\
    & \le \|Y - Y_L\|_{L^2(\Omega;B)}
	  + \left(
	    2 \sum_{\ell=0}^L N_\ell^{-1} (\|Y - Y_\ell\|_{L^2(\Omega;B)}^2
		      + \|Y - Y_{\ell-1}\|_{L^2(\Omega;B)}^2)
	  \right)^{1/2},
\end{align*}
where $Y_{-1} := 0$.
\end{lemma}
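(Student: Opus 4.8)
The three displayed relations are independent of one another and I would treat them separately: the first line is a triangle inequality, the middle equality comes from independence across levels together with Lemma~\ref{lem:MCerr}, and the final line follows from Jensen's inequality and the elementary bound $\|a-b\|_B^2\le 2\|a\|_B^2+2\|b\|_B^2$. First I would insert $\mathbb{E}[Y_L]$ and apply the triangle inequality in $L^2(\Omega;B)$,
\[
 \|\mathbb{E}[Y] - E^L[Y_L]\|_{L^2(\Omega;B)} \le \|\mathbb{E}[Y] - \mathbb{E}[Y_L]\|_{L^2(\Omega;B)} + \|\mathbb{E}[Y_L] - E^L[Y_L]\|_{L^2(\Omega;B)},
\]
and observe that $\mathbb{E}[Y]-\mathbb{E}[Y_L]=\mathbb{E}[Y-Y_L]$ is deterministic, so its $L^2(\Omega;B)$-norm equals its $B$-norm; this gives the first inequality.

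\textbf{The variance identity.} For the middle equality I would write, by definition of the multilevel estimator,
\[
 \mathbb{E}[Y_L] - E^L[Y_L] = \big(\mathbb{E}[Y_0] - E_{N_0}[Y_0]\big) + \sum_{\ell=1}^L \big(\mathbb{E}[Y_\ell - Y_{\ell-1}] - E_{N_\ell}[Y_\ell - Y_{\ell-1}]\big).
\]
Each summand has expectation $\bszero$, and the summands are independent because the Monte Carlo estimators on distinct levels are built from independent samples. Expanding $\|\cdot\|_B^2$, taking expectations, and noting that for independent, centred, square-integrable $B$-valued $U,V$ one has $\mathbb{E}[(U,V)_B]=(\mathbb{E}[U],\mathbb{E}[V])_B=0$, the cross terms drop out and the squared norm becomes the sum of the squared norms of the individual level contributions. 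Lemma~\ref{lem:MCerr}, applied on each level, identifies the $\ell$-th contribution as $N_0^{-1}{\mathsf{Var}}[Y_0]$ for $\ell=0$ and $N_\ell^{-1}{\mathsf{Var}}[Y_\ell - Y_{\ell-1}]$ for $\ell\ge 1$, which is exactly the asserted equality.

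\textbf{The final bound.} For the first term I would use Jensen's inequality, $\|\mathbb{E}[Y-Y_L]\|_B\le\mathbb{E}[\|Y-Y_L\|_B]\le\|Y-Y_L\|_{L^2(\Omega;B)}$. For the variance sum I would first use ${\mathsf{Var}}[X]=\|X\|_{L^2(\Omega;B)}^2-\|\mathbb{E}[X]\|_B^2\le\|X\|_{L^2(\Omega;B)}^2$, and then, with the convention $Y_{-1}:=0$ so that $Y_\ell-Y_{\ell-1}=(Y_\ell-Y)-(Y_{\ell-1}-Y)$ also for $\ell=0$, apply $\|a-b\|_B^2\le 2\|a\|_B^2+2\|b\|_B^2$ pointwise and take expectations. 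This bounds ${\mathsf{Var}}[Y_\ell-Y_{\ell-1}]$ (respectively ${\mathsf{Var}}[Y_0]$) by $2\|Y-Y_\ell\|_{L^2(\Omega;B)}^2+2\|Y-Y_{\ell-1}\|_{L^2(\Omega;B)}^2$ for every $\ell\in\{0,\dots,L\}$; multiplying by $N_\ell^{-1}$ and summing yields the claimed upper bound.

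\textbf{Main obstacle.} None of the steps is deep; the only point requiring genuine care is the vanishing of the cross terms in the middle step, i.e. the Pythagoras-type identity for a sum of independent centred Hilbert-space-valued random variables, where strong measurability and square integrability are what license interchanging the expectation with the inner product. Everything else reduces to the triangle inequality, Jensen's inequality, and the inequality $\|a-b\|_B^2\le 2\|a\|_B^2+2\|b\|_B^2$.
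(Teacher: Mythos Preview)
Your proposal is correct and follows essentially the same route as the paper: triangle inequality for the first line, the telescoping decomposition plus independence/centredness of the level contributions together with Lemma~\ref{lem:MCerr} for the middle equality, and Jensen plus the parallelogram-type bound for the last line. The only cosmetic difference is that the paper cites \cite[Proposition~1.12]{DPZ92} for the Pythagoras identity for sums of independent centred Hilbert-space-valued random variables and defers the first and last inequalities to \cite[Lemma~2.2]{BL12_2}, whereas you spell these out directly.
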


\begin{proof}
 This is essentially \cite[Lemma~2.2]{BL12_2} except that the square root is kept outside the sum. Therefore it remains to show the property of the multilevel Monte Carlo estimator that
 \begin{equation*}
  \|\mathbb{E}[Y_L] - E^L[Y_L]\|_{L^2(\Omega;B)}^2
    = N_0^{-1} {\mathsf{Var}}[Y_0]
	  + \sum_{\ell=1}^L N_\ell^{-1} {\mathsf{Var}}[Y_\ell - Y_{\ell -1}].
 \end{equation*}
 To prove this we first observe that
 \begin{equation*}
  \mathbb{E}[Y_L] - E^L[Y_L]
    = \mathbb{E}[Y_0] - E_{N_0}[Y_0]
      + \sum_{\ell=1}^L (\mathbb{E}[Y_\ell - Y_{\ell-1}] - E_{N_\ell}[Y_\ell - Y_{\ell-1}])
 \end{equation*}
 and that all summands are independent, centered random variables by the construction of the multilevel Monte Carlo estimator. Thus \cite[Proposition~1.12]{DPZ92} implies that 
 \begin{align*}
  \mathbb{E}[\|\mathbb{E}[Y_L] & - E^L[Y_L]\|_B^2]\\
    & = \mathbb{E}[\|\mathbb{E}[Y_0] - E_{N_0}[Y_0]\|_B^2]
      + \sum_{\ell=1}^L \mathbb{E}[\|\mathbb{E}[Y_\ell - Y_{\ell-1}] - E_{N_\ell}[Y_\ell - Y_{\ell-1}]\|_B^2]
 \end{align*}
 and Lemma~\ref{lem:MCerr} yields the claim.
 \qed
\end{proof}

This lemma enables us to choose for a given order of weak convergence of~$(Y_\ell, \ell \in \N_0)$ and for given convergence rates of the variances of $(Y_\ell - Y_{\ell-1}, \ell \in \N)$ the number of samples~$N_\ell$ on each level $\ell \in \N_0$ such that all terms in the error estimate are equilibrated. 

The following theorem is essentially Theorem~2.3 in~\cite{BL12_2}. While it was previously formulated for a sequence of discretizations obtained by regular subdivision, i.e., $h_\ell = C 2^{-\alpha \ell}$, it is written down for general sequences of discretizations here with improved sample sizes. For completeness we include the proof. We should also remark that the convergence with basis $2$ by regular subdivision in~\cite{BL12_2} is useful and important for SPDEs since most available approximation schemes that can be implemented are obtained in that way. Nevertheless, it is also known that the refinement with respect to basis~$2$ is not optimal for multilevel Monte Carlo approximations. Therefore it makes sense to reformulate the theorem in this more general way.

\begin{theorem}\label{thm:MLMC}
 Let $(a_\ell, \ell \in \N_0)$ be a decreasing sequence of positive real numbers that converges to zero and let $(Y_\ell, \ell \in \N_0)$ converge weakly to~$Y$, i.e., there exists a constant~$C_1$ such that
\begin{equation*}
 \| \mathbb{E}[Y - Y_\ell]\|_B
    \le C_1 \, a_\ell
\end{equation*}
for $\ell \in \N_0$. Furthermore assume that the variance of $(Y_\ell - Y_{\ell-1}, \ell \in \N)$ converges with order $2\eta \in [0,2]$ with respect to $(a_\ell, \ell \in \N_0)$, i.e., there exists a constant~$C_2$ such that
\begin{equation*}
 {\mathsf{Var}}[Y_\ell - Y_{\ell -1}]
    \le C_2 \, a_\ell^{2\eta},
\end{equation*}
and that ${\mathsf{Var}}[Y_0] = C_3$.
For a chosen level~$L \in \N_0$, set $N_\ell := \lceil a_L^{-2} a_\ell^{2\eta} \ell^{1+\epsilon}\rceil
$, $\ell = 1,\ldots,L$, $\epsilon > 0$, and $N_0 := \lceil a_L^{-2}\rceil
$, then the error of the multilevel Monte Carlo approximation is bounded by
\begin{equation*}
\|\mathbb{E}[Y] - E^L[Y_L] \|_{L^2(\Omega;B)}
    \le
      (C_1 + (C_3 + C_2 \, \zeta(1+\epsilon))^{1/2})\, a_L, 
\end{equation*}
where $\zeta$ denotes the Riemann zeta function, i.e., $\|\mathbb{E}[Y] - E^L[Y_L]\|_{L^2(\Omega;B)}$ has the same order of convergence as $\| \mathbb{E}[Y - Y_L]\|_B$.

Assume further that the work~$\mathcal{W}_\ell^B$ of one calculation of $Y_\ell - Y_{\ell-1}$, $\ell \ge 1$, is bounded by $C_4 \, a_\ell^{-\kappa}$ for a constant~$C_4$ and $\kappa > 0$, that the work to calculate $Y_0$ is bounded by a constant~$C_5$, and that the addition of the Monte Carlo estimators costs $C_6 a_L^{-\delta}$ for some $\delta \ge 0$ and some constant~$C_6$. Then the overall work~$\mathcal{W}_L$ is bounded by
\begin{equation*}
 \mathcal{W}_L
    \lesssim
 a_L^{-2}
	\bigl(C_5 + C_4 \sum_{\ell=1}^L a_\ell^{-(\kappa - 2\eta)} \ell^{1+\epsilon} \bigr) 
	+ C_6 a_L^{-\delta}.
\end{equation*}
If furthermore $(a_\ell, \ell \in \N_0)$ decreases polynomially, i.e., there exists $a >1$ such that $a_\ell = \operatorname{O}(a^{-\ell})$, then the bound on the computational work simplifies to
\begin{equation*}
 \mathcal{W}_L
    =
      \begin{cases}
	\operatorname{O}(a_L^{-\max\{2,\delta\}}) 
	    & \text{if} \; \kappa < 2 \eta,\\
	\operatorname{O}(\max\{a_L^{-(2 + \kappa -2\eta)}L^{2+\epsilon}, a_L^{-\delta}\}) 
	    & \text{if} \; \kappa \ge 2 \eta.
      \end{cases}
\end{equation*}
\end{theorem}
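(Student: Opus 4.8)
The plan is to establish the three displayed estimates in turn, each obtained by a direct substitution into Lemma~\ref{lem:MLMCerr} followed by elementary bookkeeping. For the \emph{error bound} I would start from the final inequality of Lemma~\ref{lem:MLMCerr} written as
\begin{equation*}
\|\mathbb{E}[Y] - E^L[Y_L]\|_{L^2(\Omega;B)} \le \|\mathbb{E}[Y-Y_L]\|_B + \Bigl(N_0^{-1}\,{\mathsf{Var}}[Y_0] + \sum_{\ell=1}^L N_\ell^{-1}\,{\mathsf{Var}}[Y_\ell - Y_{\ell-1}]\Bigr)^{1/2}
\end{equation*}
and insert the hypotheses $\|\mathbb{E}[Y-Y_L]\|_B \le C_1 a_L$, ${\mathsf{Var}}[Y_0] = C_3$, and ${\mathsf{Var}}[Y_\ell - Y_{\ell-1}] \le C_2 a_\ell^{2\eta}$. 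Since $\lceil x\rceil \ge x$, one has $N_0^{-1} \le a_L^2$ and $N_\ell^{-1} \le a_L^2 a_\ell^{-2\eta}\ell^{-(1+\epsilon)}$, hence $N_\ell^{-1}\,{\mathsf{Var}}[Y_\ell - Y_{\ell-1}] \le C_2 a_L^2\,\ell^{-(1+\epsilon)}$. Summing and using $\sum_{\ell=1}^L \ell^{-(1+\epsilon)} \le \zeta(1+\epsilon)$ turns the bracket into $a_L\,(C_3 + C_2\zeta(1+\epsilon))^{1/2}$, which combined with the first term gives the asserted constant times $a_L$; the statement about the order of convergence is then immediate.

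For the \emph{general work bound} the total work is $N_0\,\mathcal{W}_0 + \sum_{\ell=1}^L N_\ell\,\mathcal{W}_\ell^B$ plus the addition cost $C_6 a_L^{-\delta}$; substituting $\mathcal{W}_0 \le C_5$, $\mathcal{W}_\ell^B \le C_4 a_\ell^{-\kappa}$, and the definitions of $N_\ell$, everything reduces to estimating the ceilings from above via $\lceil x\rceil \le 2x$ for $x \ge 1$. This is exactly where the restriction $2\eta \in [0,2]$ enters: for $1 \le \ell \le L$ and $L$ large enough that $a_L \le 1$ (possible since $a_\ell \to 0$), monotonicity of $(a_\ell, \ell \in \N_0)$ together with $2\eta - 2 \le 0$ gives $a_L^{-2}a_\ell^{2\eta}\ell^{1+\epsilon} \ge a_L^{2\eta-2} \ge 1$, so $N_\ell \le 2a_L^{-2}a_\ell^{2\eta}\ell^{1+\epsilon}$ and likewise $N_0 \le 2a_L^{-2}$. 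Plugging in yields $\mathcal{W}_L \lesssim a_L^{-2}\bigl(C_5 + C_4\sum_{\ell=1}^L a_\ell^{-(\kappa-2\eta)}\ell^{1+\epsilon}\bigr) + C_6 a_L^{-\delta}$, with the finitely many small-$L$ exceptions absorbed by $\lesssim$.

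For the \emph{simplification under polynomial decay} $a_\ell = \operatorname{O}(a^{-\ell})$ I would split on the sign of $\kappa - 2\eta$. If $\kappa < 2\eta$, then $a_\ell^{-(\kappa-2\eta)} = a_\ell^{2\eta-\kappa} \lesssim a^{-\ell(2\eta-\kappa)}$, and since $a^{-(2\eta-\kappa)} \in (0,1)$ the series $\sum_{\ell \ge 1} a^{-\ell(2\eta-\kappa)}\ell^{1+\epsilon}$ converges, so the parenthesis in the work bound is $\operatorname{O}(1)$ and $\mathcal{W}_L \lesssim a_L^{-2} + a_L^{-\delta} = \operatorname{O}(a_L^{-\max\{2,\delta\}})$. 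If $\kappa \ge 2\eta$, monotonicity gives $a_\ell^{-(\kappa-2\eta)} \le a_L^{-(\kappa-2\eta)}$ for $\ell \le L$, and with $\sum_{\ell=1}^L \ell^{1+\epsilon} \le L^{2+\epsilon}$ the sum is at most $a_L^{-(\kappa-2\eta)}L^{2+\epsilon}$; since $\kappa - 2\eta \ge 0$ and $L^{2+\epsilon}\ge 1$ the term $C_5 a_L^{-2}$ is absorbed, leaving $\mathcal{W}_L \lesssim a_L^{-(2+\kappa-2\eta)}L^{2+\epsilon} + a_L^{-\delta} = \operatorname{O}(\max\{a_L^{-(2+\kappa-2\eta)}L^{2+\epsilon}, a_L^{-\delta}\})$.

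All computations are routine; the only genuinely delicate point I anticipate is the uniform handling of the ceiling functions, in particular checking $a_L^{-2}a_\ell^{2\eta}\ell^{1+\epsilon} \ge 1$ so that $\lceil\cdot\rceil \le 2(\cdot)$ applies for every $\ell$, which is precisely where the hypothesis $2\eta \le 2$ is needed. It is also worth remarking that the polynomial-decay hypothesis is genuinely used only in the case $\kappa < 2\eta$ (to make the level sum converge), while the case $\kappa \ge 2\eta$ follows already from monotonicity of $(a_\ell, \ell \in \N_0)$.
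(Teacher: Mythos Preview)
Your proposal is correct and follows essentially the same route as the paper's proof: apply Lemma~\ref{lem:MLMCerr}, insert the variance and weak-error bounds together with the chosen $N_\ell$ to obtain the error estimate via $\sum_{\ell\ge 1}\ell^{-(1+\epsilon)}\le\zeta(1+\epsilon)$, then bound the work by substituting $N_\ell$ and $\mathcal{W}_\ell^B$ and split on the sign of $\kappa-2\eta$. The paper handles the ceilings more loosely by simply writing $N_\ell \simeq a_L^{-2}a_\ell^{2\eta}\ell^{1+\epsilon}$, whereas you supply the explicit justification via $\lceil x\rceil\le 2x$ for $x\ge1$ and the check $a_L^{-2}a_\ell^{2\eta}\ell^{1+\epsilon}\ge a_L^{2\eta-2}\ge1$; your closing remark that polynomial decay is needed only in the case $\kappa<2\eta$ is a valid sharpening not made explicit in the paper.
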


\begin{proof}
 First, we calculate the error of the multilevel Monte Carlo estimator. It holds with the made assumptions that
\begin{equation*}
 N_0^{-1} {\mathsf{Var}}[Y_0]
    \leq C_3 \, a_L^2
\end{equation*}
and, for $\ell = 1,\ldots, L$, that
\begin{equation*}
 N_\ell^{-1} {\mathsf{Var}}[Y_\ell - Y_{\ell-1}]
    \le
      C_2 \, a_L^2 a_\ell^{-2\eta} \ell^{-(1+\epsilon)} \, a_\ell^{2\eta}
    =
      C_2 \, a_L^2 \ell^{-(1+\epsilon)}.
\end{equation*}
So overall we get that
\begin{equation*}
 \sum_{\ell=1}^L N_\ell^{-1} {\mathsf{Var}}[Y_\ell - Y_{\ell-1}]
    \le
      C_2 \, a_L^2 \sum_{\ell=1}^L \ell^{-(1+\epsilon)}
    \le
      C_2 \, a_L^2 \zeta(1+\epsilon),
\end{equation*}
where $\zeta$ denotes the Riemann zeta function. 
To finish the calculation of the error we apply Lemma~\ref{lem:MLMCerr} and assemble all estimates to
\begin{equation*}
 \|\mathbb{E}[Y] - E^L[Y_L] \|_{L^2(\Omega;B)}
    \le
      (C_1 + (C_3 + C_2 \, \zeta(1+\epsilon))^{1/2})\, a_L.
\end{equation*}
Next we calculate the necessary work to achieve this error. The overall work consists of the work~$\mathcal{W}_\ell^B$ to compute $Y_\ell - Y_{\ell-1}$ times the number of samples~$N_\ell$ on all levels $\ell = 1, \ldots, L$, the work~$\mathcal{W}_0^B$ on level~$0$, and the addition of the Monte Carlo estimators in the end. Therefore, using the observation that $N_\ell \simeq a_L^{-2} a_\ell^{2\eta} \ell^{1+\epsilon}$, $\ell = 1,\ldots,L$, and $N_0 \simeq a_L^{-2}$ with equality if the right hand side is an integer, we obtain that
\begin{align*}
 \mathcal{W}_L
    & \le
      C_5 N_0 + C_4 \sum_{\ell=1}^L N_\ell a_\ell^{-\kappa} 
	+ C_6 a_L^{-\delta}\\
    & \lesssim
      C_5 \, a_L^{-2}
	+ C_4 \sum_{\ell=1}^L a_L^{-2} a_\ell^{2\eta} \ell^{1+\epsilon} a_\ell^{-\kappa}
	+ C_6 a_L^{-\delta}\\  
    & \le a_L^{-2}
	\bigl(C_5 + C_4 \sum_{\ell=1}^L a_\ell^{-(\kappa - 2\eta)} \ell^{1+\epsilon} \bigr) 
	+ C_6 a_L^{-\delta},
\end{align*}
which proves the first claim of the theorem on the necessary work.

If $\kappa < 2 \eta$ and additionally $(a_\ell, \ell \in \N_0)$ decreases polynomially, the sum on the right hand side is absolutely convergent and therefore
\begin{equation*}
 \mathcal{W}_L
   \lesssim (C_5 + C_4 \, C) a_L^{-2}
      + C_6 a_L^{-\delta}
    = \operatorname{O}(a_L^{-\max\{2,\delta\}}).
\end{equation*}
For $\kappa \ge 2 \eta$, it holds that
\begin{align*}
  \mathcal{W}_L
   & \lesssim a_L^{-2} (C_5 + C_4 a_L^{-(\kappa - 2\eta)} L^{2+\epsilon})
      + C_6 a_L^{-\delta}\\
    & = \operatorname{O}(\max\{ a_L^{-(2 + \kappa -2\eta)} L^{2+\epsilon}, a_L^{-\delta}\}). 
\end{align*}
This finishes the proof of the theorem.
\qed
\end{proof}

We remark that the computation of the sum over different levels of the Monte Carlo estimators does not increase the computational complexity if $Y_\ell \in V_\ell$ for all $\ell \in \N_0$ and $(V_\ell, \ell \in \N_0)$ is a sequence of nested finite dimensional subspaces of~$B$.

\section{Approximation of Stochastic Partial Differential Equations}\label{sec:SPDEs}

In this section we use the framework of~\cite{AKL15} and recall the setting and the results presented in that manuscript. We use the different orders of strong and weak convergence of a Galerkin method for the approximation of a stochastic parabolic evolution problem in Section~\ref{sec:SPDE-MLMC} to show that it is essential for the efficiency of multilevel Monte Carlo methods to consider also weak convergence rates and not only strong ones as was presented in~\cite{BLS13}.

Let $(H,(\cdot,\cdot)_H)$ be a separable Hilbert space with induced norm~$\|\cdot\|_H$ and $Q:H\rightarrow H$ be a self-adjoint positive semidefinite linear operator. We define the reproducing kernel Hilbert space $\mathcal{H} = Q^{1/2}(H)$ with inner product $(\cdot,\cdot)_{\mathcal{H}} = (Q^{-1/2}\cdot,Q^{-1/2}\cdot)_H$, where $Q^{-1/2}$ denotes the square root of the pseudo inverse of~$Q$ which exists due to the made assumptions. Let us denote by $L_{\text{HS}}(\mathcal{H};H)$ the space of all Hilbert--Schmidt operators from $\mathcal{H}$ to~$H$, which will be abbreviated by $L_{\text{HS}}$ in what follows. Furthermore $L(H)$ is assumed to be the space of all bounded linear operators from $H$ to $H$. Finally, let $(\Omega, \mathcal{A}, (\mathcal{F}_t)_{t\geq 0}, P)$ be a filtered probability space satisfying the ``usual conditions'' which extends the probability space already introduced in Section~\ref{sec:MLMC}. The corresponding Bochner spaces are denoted by $L^p(\Omega;H)$, $p \ge 2$, with norms given by $\|\cdot\|_{L^p(\Omega;H)} = \mathbb{E}[\|\cdot\|_H^p]^{1/p}$. In this framework we denote by $W= (W(t), t \ge 0)$ a $(\mathcal{F}_t)_{t\geq 0}$-adapted $Q$-Wiener process. Let us consider the stochastic partial differential equation
\begin{equation}\label{eq:SPDE}
 \D X(t) = (AX(t) + F(X(t))) \, \D t + \D W(t)
\end{equation}
as Hilbert-space-valued stochastic differential equation on the finite time interval~$(0,T]$, $T < + \infty$, with deterministic initial condition $X(0) = X_0$.
We pose the following assumptions on the parameters, which ensure the existence of a mild solution and some properties of the solution which are necessary for the derivation and convergence of approximation schemes.

\begin{assumption}\label{ass:SPDE}
 Assume that the parameters of~\eqref{eq:SPDE} satisfy the following:
 \begin{enumerate}
  \item Let $A$ be a negative definite, linear operator on~$H$ such that $(-A)^{-1} \in L(H)$ and $A$ is the generator of an analytic semigroup $(S(t), t \ge 0)$ on~$H$.
  \item The initial value~$X_0$ is deterministic and satisfies $(-A)^\beta X_0 \in H$ for some $\beta \in [0,1]$.
  \item The covariance operator~$Q$ satisfies $\|(-A)^{(\beta-1)/2}\|_{L_{\text{HS}}} < + \infty$ for the same~$\beta$ as above.
  \item The drift $F: H \rightarrow H$ is twice differentiable in the sense that $F \in C_b^1(H;H) \cap C_b^2(H;\dot{H}^{-1})$, where $\dot{H}^{-1}$ denotes the dual space of the domain of~$(-A)^{1/2}$. 
 \end{enumerate}
\end{assumption}

Under Assumption~\ref{ass:SPDE}, the SPDE~\eqref{eq:SPDE} has a continuous mild solution
\begin{equation}\label{eq:mild-SPDE}
 X(t)
  = S(t) X_0
    + \int_0^t S(t-s) F(X(s)) \, \D s
    + \int_0^t S(t-s) \, \D W(s)
\end{equation}
for $t \in [0,T]$, which is in $L^p(\Omega;H)$ for all $p \ge 2$ and satisfies for some constant~$C$ that
\begin{equation*}
 \sup_{t \in [0,T]} \|X(t)\|_{L^p(\Omega;H)}
  \le C(1 + \|X_0\|_H).
\end{equation*}
We approximate the mild solution by a Galerkin method in space and a semi-implicit Euler--Maruyama scheme in time, which is made precise in what follows and spares us the treatment of stability issues.
Therefore let $(V_\ell, \ell \in \N_0)$ be a nested 
family of finite dimensional subspaces of~$V$ with refinement level~$\ell \in \N_0$, refinement sizes $(h_\ell, \ell \in \N_0)$, associated $H$-orthogonal projections $P_\ell$, and norm induced by $H$. 
For $\ell \in \N_0$, the sequence $(V_\ell, \ell \in \N_0)$ is supposed to be dense in~$H$ 
in the sense that for all $\phi \in H$, it holds that
\begin{equation*}
 \lim_{\ell \rightarrow +\infty} \| \phi - P_\ell \phi \|_H = 0.
\end{equation*}
We denote the approximate operator by $A_\ell: V_\ell \rightarrow V_\ell$ and specify the necessary properties in Assumption~\ref{ass:approx} below.
Furthermore let $(\Theta^n, n \in \N_0)$ be a sequence of 
equidistant time discretizations with step sizes $\Delta t^n$, 
i.e., for $n \in \N_0$,
\begin{equation*}
  \Theta^n := \{t^n_k= \Delta t^n k,\, k=0,\ldots,N(n)\},
\end{equation*}
where $N(n)= T/\Delta t^n$, which we assume to be an integer for simplicity reasons.
We define the fully discrete semigroup approximation by $S_{\ell,n} := (I - \Delta t^n A_\ell)^{-1} P_\ell$ and assume the following:
\begin{assumption}\label{ass:approx}
 The linear operators $A_\ell: V_\ell \rightarrow V_\ell$, $\ell \in \N_0$, and the orthogonal projectors $P_\ell: H \rightarrow V_\ell$, $\ell \in \N_0$, satisfy for all $k=1,\ldots,N(n)$ that
  \begin{equation*}
  \|(-A_\ell)^\rho S_{\ell,n}^k\|_{L(H)} \le C (t_k^n)^{-\rho}
 \end{equation*}
 for $\rho \ge 0$ and
 \begin{equation*}
  \|(-A_\ell)^{-\rho} P_\ell (-A)^\rho\|_{L(H)} \le C
 \end{equation*}
 for $\rho \in [0,1/2]$
 uniformly in $\ell, n \in \N_0$. Furthermore they satisfy for all $\theta \in [0,2]$, $\rho \in [-\theta,\min\{1,2-\theta\}]$, and $k=1,\ldots,N(n)$,
 \begin{equation*}
  \|(S(t^n_k) - S_{\ell,n}^k)(-A)^{\rho/2}\|_{L(H)}
    \le C (h_\ell^\theta + (\Delta t^n)^{\theta/2}) (t^n_k)^{-(\theta+\rho)/2}.
 \end{equation*}
\end{assumption}
The fully discrete semi-implicit Euler--Maruyama approximation is then given in recursive form for $t^n_k = \Delta t^n k \in \Theta^n$ 
and for $\ell \in \N_0$ by
\begin{equation*}
 X_{\ell,n}(t^n_k) 
   := S_{\ell,n} X_{\ell,n}(t^n_{k-1}) 
    + S_{\ell,n} F(X_{\ell,n}(t^n_{k-1})) \, \Delta t^n
    + S_{\ell,n} (W(t^n_k) - W(t^n_{k-1}))
\end{equation*}
with $X_{\ell,n}(0) := P_\ell X_0$,
which may be rewritten as
\begin{equation}\label{eq:approx}
X_{\ell,n}(t_k^n) = S_{\ell,n}^k X_0 
    + \Delta t^n \sum_{j=1}^k\ S_{\ell,n}^{k-j+1} F(X_{\ell,n}(t_{j-1}^n))
    + \sum_{j=1}^k\int_{t_{j-1}^n}^{t_j^n} S_{\ell,n}^{k-j+1} \,\D W(s).
\end{equation}
We remark here that we do not approximate the noise which might cause problems in implementations. One way to treat this problem is to truncate the Karhunen--Lo\`eve expansion of the $Q$-Wiener process depending on the decay of the spectrum of~$Q$ (see \cite{BL13,BL12_AMO}).

The theory on strong convergence of the introduced approximation scheme is already developed for some time and the convergence rates are well-known and stated in the following theorem.

\begin{theorem}[Strong convergence \cite{AKL15}]\label{thm:strong-conv}
 Let the stochastic evolution equation~\eqref{eq:SPDE} with mild solution~$X$ and the sequence of its approximations~$(X_{\ell,n}, \ell,n \in \N_0)$ given by~\eqref{eq:approx} satisfy Assumptions~\ref{ass:SPDE} and~\ref{ass:approx} for some $\beta \in (0,1]$. Then, for every $\gamma \in (0,\beta)$, there exists a constant~$C>0$ such that for all $\ell, n \in \N_0$,
 \begin{equation*}
  \max_{k=1,\ldots,N(n)} \|X(t_k^n) - X_{\ell,n}(t_k^n)\|_{L^{2}(\Omega;H)}
    \le C( h_\ell^{\gamma} + (\Delta t^n)^{\gamma/2}).
 \end{equation*}
\end{theorem}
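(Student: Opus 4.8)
The plan is to bound the strong error by splitting the difference $X(t_k^n) - X_{\ell,n}(t_k^n)$ according to the three terms in the mild solution~\eqref{eq:mild-SPDE} versus the three terms in the discrete representation~\eqref{eq:approx}: the initial-value term $(S(t_k^n) - S_{\ell,n}^k)X_0$, the drift term, and the stochastic convolution term. Each of these will be estimated in $L^2(\Omega;H)$ using the smoothing and error estimates collected in Assumption~\ref{ass:approx}, together with the spatial regularity $(-A)^\beta X_0 \in H$ and $\|(-A)^{(\beta-1)/2}\|_{L_{\text{HS}}} < +\infty$ from Assumption~\ref{ass:SPDE}. First I would treat the initial-value term: writing $(S(t_k^n) - S_{\ell,n}^k)X_0 = (S(t_k^n) - S_{\ell,n}^k)(-A)^{-\gamma}(-A)^{\gamma}X_0$ and applying the third bound of Assumption~\ref{ass:approx} with $\theta = \gamma$, $\rho = -\gamma$ (legal since $\gamma < \beta \le 1$ gives $\rho \in [-\theta, \min\{1,2-\theta\}]$) yields a factor $(h_\ell^{\gamma} + (\Delta t^n)^{\gamma/2})(t_k^n)^{0}$, and $\|(-A)^\gamma X_0\|_H \le \|(-A)^\beta X_0\|_H^{\gamma/\beta}\|X_0\|_H^{1-\gamma/\beta}$ (or simply $\le C\|(-A)^\beta X_0\|_H$ after interpolation) is finite.

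Next I would handle the stochastic convolution, which carries the regularity coming from the noise. The error here is $\sum_{j=1}^k \int_{t_{j-1}^n}^{t_j^n}(S(t_k^n - s) - S_{\ell,n}^{k-j+1})\,\D W(s)$, up to an additional term accounting for the mismatch between the exact semigroup $S(t_k^n - s)$ and its value $S(t_k^n - t_{j-1}^n)$ frozen at the left endpoint of each subinterval. By the It\^o isometry in $L_{\text{HS}}$, the mean-square error is $\sum_{j=1}^k \int_{t_{j-1}^n}^{t_j^n}\|(S(t_k^n - s) - S_{\ell,n}^{k-j+1})Q^{1/2}\|_{L_{\text{HS}}}^2\,\D s$; I would insert $(-A)^{(1-\beta)/2}(-A)^{(\beta-1)/2}$, use $\|(-A)^{(\beta-1)/2}\|_{L_{\text{HS}}} < +\infty$ to pull out the Hilbert--Schmidt norm, and apply Assumption~\ref{ass:approx} with $\theta = \gamma$, $\rho = 1-\beta$ to get a bound of the form $(h_\ell^{\gamma} + (\Delta t^n)^{\gamma/2})^2 \sum_j \int (t_k^n - s)^{-(\gamma + 1 - \beta)}\,\D s$, and the exponent $\gamma + 1 - \beta < 1$ makes the time integral finite, uniformly in $k$. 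The drift term is then controlled by the same machinery combined with $F \in C_b^1(H;H)$: the Lipschitz bound on $F$ and the a priori bound $\sup_t\|X(t)\|_{L^2(\Omega;H)} \le C(1+\|X_0\|_H)$ reduce it to a sum of semigroup-difference estimates plus a term $\sum_j \int \|(S_{\ell,n}^{k-j+1} - \text{(shift)})\|\cdot\|X - X_{\ell,n}\|$ that feeds back into the error itself.

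The main obstacle is that the drift contribution produces, after all the smoothing estimates are applied, a term of the form $C\,\Delta t^n \sum_{j=1}^{k-1}(t_k^n - t_{j-1}^n)^{-\sigma}\|X(t_{j-1}^n) - X_{\ell,n}(t_{j-1}^n)\|_{L^2(\Omega;H)}$ with $\sigma < 1$, so one does not get a closed estimate directly but rather an integral (Volterra-type) inequality for the error sequence $e_k := \|X(t_k^n) - X_{\ell,n}(t_k^n)\|_{L^2(\Omega;H)}$. Closing this requires a discrete Gronwall lemma with a weakly singular kernel, which yields $e_k \le C(h_\ell^{\gamma} + (\Delta t^n)^{\gamma/2})$ with the constant $C$ depending on $T$ but uniform in the discretization parameters. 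Since all of this is carried out in detail in~\cite{AKL15}, I would present the estimates for the three terms, point to the singular discrete Gronwall argument, and invoke the cited result for the remaining bookkeeping rather than reproducing every constant.
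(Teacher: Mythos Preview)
The paper does not actually prove this theorem: it is stated as a known result and attributed to~\cite{AKL15}, with no argument given beyond the citation. Your proposal therefore goes well beyond what the paper does, supplying a genuine sketch of the proof that the cited reference carries out.

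Your outline is the standard and correct route for such results: split the error according to the three constituents of the mild formulation, use the deterministic semigroup error estimate from Assumption~\ref{ass:approx} with appropriately chosen $\theta$ and $\rho$ on each piece, exploit $\|(-A)^{(\beta-1)/2}\|_{L_{\text{HS}}}<\infty$ together with the It\^o isometry for the stochastic convolution, and close via a weakly singular discrete Gronwall inequality for the drift feedback. Since the paper itself simply cites~\cite{AKL15} for the full argument, your final paragraph---presenting the term-by-term estimates and then deferring the Gronwall bookkeeping to the reference---is in fact \emph{more} than the paper provides and entirely appropriate here.
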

It should be remarked at this point that the order of strong convergence does not exceed $1/2$ although we are considering additive noise since the regularity of the parameters of the SPDE are assumed to be rough. Under smoothness assumptions the rate of strong convergence attains one for additive noise since the higher order Milstein scheme is equal to the Euler--Maruyama scheme. Nevertheless, under the made assumptions on the regularity of the initial condition~$X_0$ and the covariance operator~$Q$ of the noise, this does not happen in the considered case.

The purpose of the multilevel Monte Carlo method is to approximate expressions of the form $\mathbb{E}[\varphi(X(t))]$ efficiently, where $\varphi:H\rightarrow \R$ is a sufficiently smooth functional. Therefore weak error estimates of the form $|\mathbb{E}[\varphi(X(t^n_k))] - \mathbb{E}[\varphi(X_{\ell,n}(t^n_k))]|$ are of importance. Before we state the convergence theorem from~\cite{AKL15}, we specify the necessary properties of~$\varphi$ in the following assumption.
\begin{assumption}\label{ass:functional}
 The functional $\varphi: H \rightarrow \R$ is twice continuously Fr\'echet differentiable and there exists an integer $m \ge 2$ and a constant~$C$ such that for all $x \in H$ and $j=1,2$,
 \begin{equation*}
  \|\varphi^{(j)}(x)\|_{L^{[m]}(H;\R)}
    \le C (1 + \|x\|_H^{m-j}),
 \end{equation*}
 where $\|\varphi^{(j)}(x)\|_{L^{[m]}(H;\R)}$ is the smallest constant~$K>0$ such that for all $u_1,\ldots, u_m \in H$,
 \begin{equation*}
  |\varphi^{(j)}(x)(u_1,\ldots,u_m)|
    \le K \|u_1\|_H \cdots \|u_m\|_H.
 \end{equation*}
\end{assumption}
Combining this assumption on the functional~$\varphi$ with Assumptions~\ref{ass:SPDE} and~\ref{ass:approx} on the parameters and approximation of the SPDE, we obtain the following result, which was proven in~\cite{AKL15} using Malliavin calculus.

\begin{theorem}[Weak convergence \cite{AKL15}]\label{thm:weak-conv}
 Let the stochastic evolution equation~\eqref{eq:SPDE} with mild solution~$X$ and the sequence of its approximations~$(X_{\ell,n}, \ell,n \in \N_0)$ given by~\eqref{eq:approx} satisfy Assumptions~\ref{ass:SPDE} and~\ref{ass:approx} for some $\beta \in (0,1]$. Then, for every $\varphi:H \rightarrow \R$ satisfying Assumption~\ref{ass:functional} and all $\gamma \in [0,\beta)$, there exists a constant~$C>0$ such that for all $\ell, n \in \N_0$,
 \begin{equation*}
  \max_{k=1,\ldots,N(n)} | \mathbb{E}[\varphi(X(t_k^n)) - \varphi(X_{\ell,n}(t_k^n))]|
    \le C( h_\ell^{2\gamma} + (\Delta t^n)^\gamma).
 \end{equation*}
\end{theorem}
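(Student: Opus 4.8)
Since this theorem is quoted verbatim from~\cite{AKL15}, I only sketch the route one would take; the argument there is carried out by Malliavin calculus. The plan is to fix $t=t_k^n$, subtract the mild representation~\eqref{eq:mild-SPDE} from the discrete one~\eqref{eq:approx}, and split the weak error $\mathbb{E}[\varphi(X(t))-\varphi(X_{\ell,n}(t))]$ into a \emph{deterministic} contribution, built from $(S(t)-S_{\ell,n}^k)X_0$ together with the two drift integrals, and a \emph{stochastic} contribution, the difference of the stochastic convolutions $\int_0^t S(t-s)\,\rd W(s)$ and $\sum_{j=1}^k\int_{t_{j-1}^n}^{t_j^n}S_{\ell,n}^{k-j+1}\,\rd W(s)$. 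For the deterministic part a first-order Taylor expansion of $\varphi$ around $X(t)$ reduces everything to terms of the form $\mathbb{E}[\varphi'(\,\cdot\,)\,e]$ with $e$ a pathwise error term; Assumption~\ref{ass:functional} controls $\varphi'$, and one bounds $\|e\|_{L^2(\Omega;H)}$ by the smoothing estimate of Assumption~\ref{ass:approx}, $\|(S(t)-S_{\ell,n}^k)(-A)^{\rho/2}\|_{L(H)}\le C(h_\ell^\theta+(\Delta t^n)^{\theta/2})\,t^{-(\theta+\rho)/2}$, used with $\theta$ close to $2\beta$ and $\rho$ matched to the available regularity ($(-A)^\beta X_0\in H$ for the initial value, and the extra half power of $(-A)^{1/2}$ coming from the second-order Taylor term of $F$, whose second derivative lives in $\dot{H}^{-1}$, for the drift). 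This already produces the claimed order $h_\ell^{2\gamma}+(\Delta t^n)^\gamma$ for that part.

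The delicate term is the stochastic convolution: if one simply pulls the modulus inside the expectation and invokes the strong estimate of Theorem~\ref{thm:strong-conv}, only the strong rate $h_\ell^\gamma+(\Delta t^n)^{\gamma/2}$ appears, so the gain must come from integration by parts on Wiener space. Writing the stochastic error as $\int_0^t\Psi_{\ell,n}(s)\,\rd W(s)$ with $\Psi_{\ell,n}(s)=S(t-s)-S_{\ell,n}^{k-j(s)+1}$ ($j(s)$ the index of the time subinterval containing $s$), the Malliavin duality formula gives, schematically,
\[
\mathbb{E}\Bigl[\varphi'(X(t))\!\int_0^t\!\Psi_{\ell,n}(s)\,\rd W(s)\Bigr]
=\mathbb{E}\Bigl[\int_0^t\!\bigl\langle \varphi''(X(t))\,D_sX(t),\,\Psi_{\ell,n}(s)\bigr\rangle_{L_{\text{HS}}}\,\rd s\Bigr],
\]
where $D$ is the Malliavin derivative and $D_sX(t)$ satisfies, in mild form, the linearised equation, so that $D_sX(t)=S(t-s)+(\text{a smoother remainder})$. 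Splitting $D_sX(t)$ accordingly and pairing its leading semigroup part against $\Psi_{\ell,n}(s)$ produces \emph{two} factors of the error operator; combining $\|\Psi_{\ell,n}(s)(-A)^{-\gamma}\|_{L(H)}\lesssim h_\ell^{2\gamma}+(\Delta t^n)^\gamma$ (again Assumption~\ref{ass:approx}) with $\|(-A)^{\gamma}S(t-s)\|_{L(H)}\le C(t-s)^{-\gamma}$, Assumption~\ref{ass:functional} on $\varphi''$, and the Hilbert--Schmidt bound $\|(-A)^{(\beta-1)/2}\|_{L_{\text{HS}}}<+\infty$ to carry out the sum over an orthonormal basis of~$\mathcal{H}$, is precisely what squares the order. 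In addition one controls $D_sX_{\ell,n}(t)$ uniformly in $\ell,n$ through Assumption~\ref{ass:approx} and a discrete Gronwall argument, since $F$ is applied to the approximate process.

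I expect the main obstacle to be twofold. First, closing the recursion created by the nonlinearity $F$: the solution (resp.\ its approximation) and its Malliavin derivative both enter through $F'$ and $F''$, so one needs the $C_b^1(H;H)$ bound for the first-derivative contributions and the $C_b^2(H;\dot{H}^{-1})$ bound for the second-derivative ones --- the negative-order target space $\dot{H}^{-1}$ is exactly what supplies the extra half power of $(-A)^{1/2}$ needed to tame one singular kernel --- together with Gronwall lemmas (continuous in time, discrete in the number of steps) to propagate the estimates from step to step. Second, the bookkeeping of the time singularities: each application of the smoothing estimate contributes a factor $(t-s)^{-p}$ or $t^{-p}$, and after all the splittings one must verify that the accumulated exponents stay below $1$ so that the $\rd s$-integrals and the sums over time steps converge; this is exactly the constraint that forces $\gamma<\beta$ rather than $\gamma=\beta$, and it is the technical heart of the argument.
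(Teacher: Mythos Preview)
The paper does not supply a proof of this theorem at all: it merely records the statement and points to~\cite{AKL15}, adding only that the argument there ``was proven \ldots\ using Malliavin calculus.'' Your proposal explicitly acknowledges this and then sketches precisely the Malliavin-calculus route (duality formula applied to the stochastic-convolution error, linearised equation for $D_sX(t)$, doubling of the semigroup-error factor, handling of $F$ via $C_b^1(H;H)\cap C_b^2(H;\dot H^{-1})$ and Gronwall, control of time singularities forcing $\gamma<\beta$). That is fully consistent with what the paper asserts about the method, so as far as comparison with the paper is concerned your proposal is correct and there is nothing further to add.
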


An example that satisfies Assumptions~\ref{ass:SPDE} and~\ref{ass:approx} is presented in Section~5 of~\cite{AKL15} and consists of a (general) heat equation on a bounded, convex, and polygonal domain which is approximated with a finite element method using continuous piecewise linear functions.

\section{SPDE Multilevel Monte Carlo Approximation}\label{sec:SPDE-MLMC}

In the previous section, we considered weak error analysis for expressions of the form $\mathbb{E}[\varphi(X(t))]$, where we approximated the mild solution~$X$ of the SPDE~\eqref{eq:SPDE} with a fully discrete scheme. Unluckily, this is not yet sufficient to compute ``numbers'' since we are in general not able to compute the expectation exactly. Going back to Section~\ref{sec:MLMC}, we recall that the first approach to approximate the expected value is to do a (singlelevel) Monte Carlo approximation. This leads to the overall error given in the following corollary, which is proven similarly to~\cite[Corollary~3.6]{BL12_2} and included for completeness.

\begin{corollary} \label{cor:error_SLMC}
Let the stochastic evolution equation~\eqref{eq:SPDE} with mild solution~$X$ and the sequence of its approximations~$(X_{\ell,n}, \ell,n \in \N_0)$ given by~\eqref{eq:approx} satisfy Assumptions~\ref{ass:SPDE} and~\ref{ass:approx} for some $\beta \in (0,1]$. Then, for every $\varphi:H \rightarrow \R$ satisfying Assumption~\ref{ass:functional} and all $\gamma \in [0,\beta)$, there exists a constant~$C>0$ such that for all $\ell, n \in \N_0$, the error of the Monte Carlo approximation is bounded by
\begin{equation*}
\max_{k=1,\ldots,N(n)} \| \mathbb{E}[\varphi(X(t_k^n))] - E_N[\varphi(X_{\ell,n}(t_k^n)))]\|_{L^2(\Omega;\R)}
 \leq 
C \Bigl(h_\ell^{2\gamma} + (\Delta t^n)^\gamma +
\frac1{\sqrt{N}}\Bigr)
\end{equation*}
for $N \in \N$.
\end{corollary}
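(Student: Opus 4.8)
The plan is to split the error by the triangle inequality into the deterministic weak (bias) error and the statistical Monte Carlo error, and to control the former with Theorem~\ref{thm:weak-conv} and the latter with Lemma~\ref{lem:MCerr}. Fix $k \in \{1,\ldots,N(n)\}$ and write $X := X(t_k^n)$ and $X_{\ell,n} := X_{\ell,n}(t_k^n)$ for brevity. Since $\mathbb{E}[\varphi(X)]$ and $\mathbb{E}[\varphi(X_{\ell,n})]$ are deterministic real numbers, inserting $\pm\,\mathbb{E}[\varphi(X_{\ell,n})]$ gives
\begin{align*}
 \| \mathbb{E}[\varphi(X)] - E_N[\varphi(X_{\ell,n})]\|_{L^2(\Omega;\R)}
 & \le \bigl|\mathbb{E}[\varphi(X)] - \mathbb{E}[\varphi(X_{\ell,n})]\bigr| \\
 & \quad + \bigl\|\mathbb{E}[\varphi(X_{\ell,n})] - E_N[\varphi(X_{\ell,n})]\bigr\|_{L^2(\Omega;\R)} ,
\end{align*}
and the first term on the right is at most $C(h_\ell^{2\gamma} + (\Delta t^n)^\gamma)$ for every $\gamma \in [0,\beta)$ by Theorem~\ref{thm:weak-conv}.

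For the second term, I apply Lemma~\ref{lem:MCerr} with $B = \R$ and $Y = \varphi(X_{\ell,n})$ --- which belongs to $L^2(\Omega;\R)$ as soon as its second moment is finite --- to get
\begin{equation*}
 \bigl\|\mathbb{E}[\varphi(X_{\ell,n})] - E_N[\varphi(X_{\ell,n})]\bigr\|_{L^2(\Omega;\R)}
 = \frac{1}{\sqrt{N}}\, \mathsf{Var}[\varphi(X_{\ell,n})]^{1/2}
 \le \frac{1}{\sqrt{N}}\, \|\varphi(X_{\ell,n})\|_{L^2(\Omega;\R)} .
\end{equation*}
Hence the whole estimate reduces to bounding $\|\varphi(X_{\ell,n}(t_k^n))\|_{L^2(\Omega;\R)}$ by a constant that does not depend on $\ell$, $n$, or $k$. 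By Assumption~\ref{ass:functional} the functional $\varphi$ has at most polynomial growth of order $m$ ($\varphi'$ grows of order $m-1$, so integrating along the segment from $0$ to $x$ yields $|\varphi(x)| \le C(1 + \|x\|_H^m)$), and therefore $\|\varphi(X_{\ell,n})\|_{L^2(\Omega;\R)}^2 \le C\bigl(1 + \mathbb{E}[\|X_{\ell,n}\|_H^{2m}]\bigr)$; so it is enough to know that the family $(X_{\ell,n}(t_k^n))$ is bounded in $L^{2m}(\Omega;H)$ uniformly over $\ell$, $n$, and $k=1,\ldots,N(n)$.

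The main obstacle is precisely this uniform moment estimate for the fully discrete scheme~\eqref{eq:approx}; the remaining steps are routine. I expect it to follow from the ingredients already in place: the uniform operator bounds on $(-A_\ell)^\rho S_{\ell,n}^k$ (with $\rho=0$ in particular) and on $(-A_\ell)^{-\rho}P_\ell(-A)^\rho$ from Assumption~\ref{ass:approx}, the boundedness of $F$ implied by $F \in C_b^1(H;H)$, and the Hilbert--Schmidt condition of Assumption~\ref{ass:SPDE} controlling the discrete stochastic convolution term in~\eqref{eq:approx} (whose higher moments are governed by its second moment, being Gaussian). These yield $\max_{k=1,\ldots,N(n)}\|X_{\ell,n}(t_k^n)\|_{L^{2m}(\Omega;H)} \le C(1+\|X_0\|_H^m)$ with $C$ independent of the discretization parameters, in parallel with the bound $\sup_{t\in[0,T]}\|X(t)\|_{L^p(\Omega;H)} \le C(1+\|X_0\|_H)$ stated for the mild solution and essentially contained in~\cite{AKL15}. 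Granting this, one takes the maximum over $k=1,\ldots,N(n)$ in the two displayed bounds, adds them, and obtains the asserted estimate, which proves the corollary.
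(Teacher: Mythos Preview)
Your proof follows essentially the same route as the paper's: split by the triangle inequality into the weak (bias) error controlled by Theorem~\ref{thm:weak-conv} and the statistical error controlled by Lemma~\ref{lem:MCerr}. You go beyond the paper's terse argument by explicitly addressing the uniform-in-$\ell,n,k$ bound on $\|\varphi(X_{\ell,n}(t_k^n))\|_{L^2(\Omega;\R)}$ needed for the constant~$C$ to be independent of the discretization --- a point the paper leaves implicit --- and your sketch (polynomial growth of~$\varphi$ from Assumption~\ref{ass:functional}, then uniform $L^{2m}$ moment bounds for the scheme via the stability estimates in Assumption~\ref{ass:approx} and Gaussianity of the discrete stochastic convolution, as in~\cite{AKL15}) is correct.
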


\begin{proof}
 By the triangle inequality we obtain that
 \begin{align*}
  \| \mathbb{E}[\varphi(X(t_k^n))] - & E_N[\varphi(X_{\ell,n}(t_k^n)))]\|_{L^2(\Omega;\R)}\\
    & \leq 
      \| \mathbb{E}[\varphi(X(t_k^n))] - \mathbb{E}[\varphi(X_{\ell,n}(t_k^n)))]\|_{L^2(\Omega;\R)}\\
	& \qquad + \|\mathbb{E}[\varphi(X_{\ell,n}(t_k^n)))]- E_N[\varphi(X_{\ell,n}(t_k^n)))]\|_{L^2(\Omega;\R)}.
 \end{align*}
 The first term is bounded by the weak error in Theorem~\ref{thm:weak-conv} while the second one is the Monte Carlo error in Lemma~\ref{lem:MCerr}. Putting these two estimates together yields the claim.
 \qed
\end{proof}

The errors are all converging with the same speed if we couple $\ell$ and $n$ such that $h_\ell^2 \simeq \Delta t^n$ as well as the number of Monte Carlo samples~$N_\ell$ for $\ell \in \N_0$ by $N_\ell \simeq h_\ell^{-4\gamma}$.
This implies for the overall work that
\begin{equation*}
 \mathcal{W}_\ell 
    = \mathcal{W}_\ell^H \cdot \mathcal{W}_\ell^T \cdot \mathcal{W}_\ell^{\text{MC}}
    = \operatorname{O}(h_\ell^{-d} (\Delta t^n)^{-1} N_\ell)
    = \operatorname{O}(h_\ell^{-(d+2+4\gamma)})
    ,
\end{equation*}
where we assumed that the computational work in space is bounded by $\mathcal{W}_\ell^H = \operatorname{O}(h_\ell^{-d})$ for some $d \ge 0$, which refers usually to the dimension of the underlying spatial domain.

Since we have just seen that a (singlelevel) Monte Carlo simulation is rather expensive, the idea is to use a multilevel Monte Carlo approach instead which is obtained by the combination of the results of the previous two sections. In what follows we show that it is essential for the computational costs that weak convergence results are available, since the number of samples that should be chosen according to the theory depends heavily on this fact, if weak and strong convergence rates do not coincide.

Let us start under the assumption that Theorem~\ref{thm:weak-conv} (weak convergence rates) is not available. This leads to the following numbers of samples and computational work.

\begin{corollary}[Strong convergence]\label{cor:MLMC-strong-conv}
Let the stochastic evolution equation~\eqref{eq:SPDE} with mild solution~$X$ and the sequence of its approximations~$(X_{\ell,n}, \ell,n \in \N_0)$ given by~\eqref{eq:approx} satisfy Assumptions~\ref{ass:SPDE} and~\ref{ass:approx} for some $\beta \in (0,1]$.
Furthermore couple $\ell$ and $n$ such that $\Delta t^n \simeq h_\ell^2$ and for $L \in \N_0$, set $N_0 \simeq \lceil h_L^{-2\gamma}\rceil$ as well as $N_\ell \simeq \lceil h_L^{-2\gamma} h_\ell^{2\gamma} \ell^{1+\epsilon}\rceil$ for all $\ell=1,\ldots,L$ and arbitrary fixed $\epsilon > 0$.
Then, for every $\varphi:H \rightarrow \R$ satisfying Assumption~\ref{ass:functional} and all $\gamma \in [0,\beta)$, there exists a constant~$C>0$ such that for all $\ell, n \in \N_0$, the error of the multilevel Monte Carlo approximation is bounded by
\begin{equation*}
\max_{k=1,\ldots,N(n_L)} \| \mathbb{E}[\varphi(X(t_k^{n_L}))] - E^L[\varphi(X_{L,n_L}(t_k^{n_L}))]\|_{L^2(\Omega;\R)}
 \leq 
C h_L^{\gamma},
\end{equation*}
where $n_L$ is chosen according to the coupling with~$L$.
If the work of one computation in space is bounded by $\mathcal{W}_\ell^H = \operatorname{O}(h_\ell^{-d})$ for $\ell=0,\ldots,L$ and fixed $d \ge 0$, which includes the summation of different levels, the overall work will be bounded by
 \begin{equation*}
  \mathcal{W}_L = \operatorname{O}(h_L^{-(d+2)}L^{2+\epsilon})
  .
 \end{equation*}
\end{corollary}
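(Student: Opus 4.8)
The statement follows by applying Theorem~\ref{thm:MLMC} with $B=\R$, with the decreasing null sequence $a_\ell:=h_\ell^\gamma$, and with the real-valued random variables $Y:=\varphi(X(t_k^{n_L}))$ and $Y_\ell:=\varphi(X_{\ell,n_\ell}(t_k^{n_L}))$ for $\ell=0,\dots,L$, where $n_\ell$ is the time index coupled to the spatial level $\ell$ through $\Delta t^{n_\ell}\simeq h_\ell^2$; under the regular subdivision $h_\ell\simeq 2^{-\ell}$ the time grids $\Theta^{n_\ell}$ are nested, so every level can indeed be evaluated at the common time point $t_k^{n_L}$. Because Theorem~\ref{thm:weak-conv} is assumed to be unavailable, both ingredients required by Theorem~\ref{thm:MLMC} --- the bound $\|\mathbb{E}[Y-Y_\ell]\|_\R\le C_1 a_\ell$ and the variance decay $\mathsf{Var}[Y_\ell-Y_{\ell-1}]\le C_2 a_\ell^{2\eta}$ --- will be deduced from the strong convergence Theorem~\ref{thm:strong-conv}. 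The final maximum over $k$ is then harmless, since all the bounds below are uniform in $t_k^{n_L}$ and there are only finitely many such points.

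The single non-mechanical step is the estimate $\|\varphi(X(t))-\varphi(X_{\ell,n_\ell}(t))\|_{L^2(\Omega;\R)}\le C h_\ell^\gamma$, uniform in $t\in\Theta^{n_\ell}$. Writing $\varphi(X)-\varphi(X_{\ell,n_\ell})=\int_0^1\varphi'(X_{\ell,n_\ell}+r(X-X_{\ell,n_\ell}))(X-X_{\ell,n_\ell})\,\rd r$ and using the growth bound on $\varphi'$ from Assumption~\ref{ass:functional} gives the pointwise bound $|\varphi(X)-\varphi(X_{\ell,n_\ell})|\le C(1+\|X\|_H^{m-1}+\|X_{\ell,n_\ell}\|_H^{m-1})\|X-X_{\ell,n_\ell}\|_H$; an application of H\"older's inequality, the moment bound $\sup_t\|X(t)\|_{L^p(\Omega;H)}\le C(1+\|X_0\|_H)$ recalled in Section~\ref{sec:SPDEs} together with the analogous bound for the approximations (obtained as in~\cite{AKL15}), and the $L^p(\Omega;H)$-version of Theorem~\ref{thm:strong-conv} combined with the coupling $\Delta t^{n_\ell}\simeq h_\ell^2$ then yield $\|\varphi(X(t))-\varphi(X_{\ell,n_\ell}(t))\|_{L^2(\Omega;\R)}\le C(h_\ell^\gamma+(\Delta t^{n_\ell})^{\gamma/2})\le C h_\ell^\gamma=C a_\ell$. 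Consequently $\|\mathbb{E}[Y-Y_\ell]\|_\R\le\|Y-Y_\ell\|_{L^2(\Omega;\R)}\le C_1 a_\ell$, and, using $\mathsf{Var}[Z]\le\|Z\|_{L^2(\Omega;\R)}^2$, the triangle inequality, and the bounded ratios $h_{\ell-1}/h_\ell$, $\mathsf{Var}[Y_\ell-Y_{\ell-1}]\le 2\|Y-Y_\ell\|_{L^2(\Omega;\R)}^2+2\|Y-Y_{\ell-1}\|_{L^2(\Omega;\R)}^2\le C_2 a_\ell^2$, so the variances decay with order $2\eta=2$ (i.e.\ $\eta=1\in[0,1]$) with respect to $(a_\ell)$, and we set $C_3:=\mathsf{Var}[Y_0]$.

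With $a_\ell=h_\ell^\gamma$ and $\eta=1$, the sample sizes prescribed by Theorem~\ref{thm:MLMC} are $N_0=\lceil a_L^{-2}\rceil=\lceil h_L^{-2\gamma}\rceil$ and $N_\ell=\lceil a_L^{-2}a_\ell^{2\eta}\ell^{1+\epsilon}\rceil=\lceil h_L^{-2\gamma}h_\ell^{2\gamma}\ell^{1+\epsilon}\rceil$, which coincide with those in the statement, so the error bound of the theorem reads $\|\mathbb{E}[Y]-E^L[Y_L]\|_{L^2(\Omega;\R)}\le(C_1+(C_3+C_2\,\zeta(1+\epsilon))^{1/2})\,a_L=C h_L^\gamma$; taking the maximum over $k$ gives the first assertion. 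For the work, one realisation of $Y_\ell-Y_{\ell-1}$ is produced by iterating the recursion~\eqref{eq:approx} on levels $\ell$ and $\ell-1$ along a shared Brownian path, at cost $\operatorname{O}\bigl(\mathcal{W}_\ell^H(\Delta t^{n_\ell})^{-1}\bigr)=\operatorname{O}(h_\ell^{-d}h_\ell^{-2})=\operatorname{O}(h_\ell^{-(d+2)})=\operatorname{O}(a_\ell^{-\kappa})$ with $\kappa:=(d+2)/\gamma$, the contributions of the coarser level, the evaluation of $\varphi$, and the cross-level summation (the last folded into $\mathcal{W}_\ell^H$) being of strictly lower order, while the level-$0$ work is bounded by a constant. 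Since $\gamma<\beta\le 1$ forces $\kappa=(d+2)/\gamma>d+2\ge 2=2\eta$ and $a_\ell=h_\ell^\gamma=\operatorname{O}((2^\gamma)^{-\ell})$ decreases polynomially, the second case of the work estimate in Theorem~\ref{thm:MLMC} applies, and because $2+\kappa-2\eta=\kappa$ it gives $\mathcal{W}_L=\operatorname{O}(a_L^{-\kappa}L^{2+\epsilon})=\operatorname{O}(h_L^{-(d+2)}L^{2+\epsilon})$, as claimed.

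The main obstacle is the transition from strong convergence of $X_{\ell,n}$ to strong convergence of $\varphi(X_{\ell,n})$ carried out in the second paragraph: as Assumption~\ref{ass:functional} only controls the Fr\'echet derivatives of $\varphi$ up to polynomial growth, one cannot remain within $L^2(\Omega;H)$ but must combine the $L^p$-version of Theorem~\ref{thm:strong-conv} with uniform $L^p(\Omega;H)$ moment bounds for the mild solution and its fully discrete approximations; this is standard and provided in~\cite{AKL15}. The remainder is a routine substitution of the parameters $a_\ell=h_\ell^\gamma$, $\eta=1$, and $\kappa=(d+2)/\gamma$ into Theorem~\ref{thm:MLMC}.
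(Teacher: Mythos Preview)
Your proof is correct and follows the same route as the paper: verify the hypotheses of Theorem~\ref{thm:MLMC} with $a_\ell=h_\ell^\gamma$ and $\eta=1$, then read off the error and work bounds. The only visible difference is in the passage from strong convergence of $X_{\ell,n_\ell}$ to that of $\varphi(X_{\ell,n_\ell})$: the paper simply asserts that $\varphi$ is Lipschitz by citing \cite[Proposition~3.4]{BL13} and stays in $L^2$, whereas you spell this out via the mean value theorem, the polynomial growth on $\varphi'$ from Assumption~\ref{ass:functional}, H\"older's inequality, and uniform $L^p$ moment bounds together with the $L^p$-version of Theorem~\ref{thm:strong-conv} from \cite{AKL15}. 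Your argument is more self-contained and honest about the fact that Assumption~\ref{ass:functional} only gives polynomial growth of $\varphi'$, at the price of invoking the $L^p$ strong error estimate; the paper's shortcut is terser but relies on the external reference. Your explicit identification of $\kappa=(d+2)/\gamma$ and the verification $\kappa\ge 2\eta$ also make explicit what the paper leaves implicit in the last sentence of its proof.
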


\begin{proof}
 We first observe that
   \begin{equation*}
  \max_{k=1,\ldots,N(n_L)} \|X(t_k^{n_L}) - X_{L,n_L}(t_k^{n_L})\|_{L^{2}(\Omega;H)}
    \le C( h_L^{\gamma} + (\Delta t^n)^{\gamma/2})
    \simeq C \cdot 2 \cdot h_L^{\gamma}
 \end{equation*}
 by Theorem~\ref{thm:strong-conv} and the coupling of the space and time discretizations. Furthermore it holds that
 \begin{align*}
    \max_{k=1,\ldots,N(n_L)} | \mathbb{E}[\varphi(X(t_k^{n_L}))] - & \mathbb{E}[\varphi(X_{L,n_L}(t_k^{n_L}))]|\\
      & \leq \max_{k=1,\ldots,N(n_L)} \|\varphi(X(t_k^{n_L})) - \varphi(X_{L,n_L}(t_k^{n_L}))\|_{L^{2}(\Omega;\R)}\\
      & \leq C \max_{k=1,\ldots,N(n_L)} \|X(t_k^{n_L}) - X_{L,n_L}(t_k^{n_L})\|_{L^{2}(\Omega;H)}\\
      & \leq C h_L^{\gamma},
\end{align*}
 since $\varphi$ is assumed to be a Lipschitz functional (cf.~\cite[Proposition~3.4]{BL13}).
 Furthermore Lemma~\ref{lem:MLMCerr} implies that
 \begin{align*}
  {\mathsf{Var}}[ & \varphi(X_{\ell,n_\ell}(t)) - \varphi(X_{\ell-1,n_{\ell-1}}(t))]\\
    & \leq 2(\|\varphi(X(t)) - \varphi(X_{\ell,n_\ell}(t))\|_{L^2(\Omega;\R)}^2
		      + \|\varphi(X(t)) - \varphi(X_{\ell-1,n_{\ell-1}}(t))\|_{L^2(\Omega;\R)}^2)\\
    & \leq C h_\ell^{2\gamma}.
 \end{align*}
 Setting $a_\ell = h_\ell^\gamma$, $\eta = 1$, and the sample numbers according to Theorem~\ref{thm:MLMC}, we obtain the claim.
 \qed
\end{proof}

If the additional information of better weak convergence rates from Theorem~\ref{thm:weak-conv} is available, the parameters that are plugged into Theorem~\ref{thm:MLMC} change, which leads for given accuracy to less samples and therefore to less computational work. This is made precise in the following corollary and the computations for given accuracy afterwards.

\begin{corollary}[Weak convergence]\label{cor:MLMC-weak-conv}
Let the stochastic evolution equation~\eqref{eq:SPDE} with mild solution~$X$ and the sequence of its approximations~$(X_{\ell,n}, \ell,n \in \N_0)$ given by~\eqref{eq:approx} satisfy Assumptions~\ref{ass:SPDE} and~\ref{ass:approx} for some $\beta \in (0,1]$.
Furthermore couple $\ell$ and $n$ such that $\Delta t^n \simeq h_\ell^2$ and for $L \in \N_0$, set $N_0 \simeq \lceil h_L^{-4\gamma}\rceil$ as well as $N_\ell \simeq \lceil h_L^{-4\gamma} h_\ell^{2\gamma} \ell^{1+\epsilon}\rceil$ for all $\ell=1,\ldots,L$ and arbitrary fixed $\epsilon > 0$.
Then, for every $\varphi:H \rightarrow \R$ satisfying Assumption~\ref{ass:functional} and all $\gamma \in [0,\beta)$, there exists a constant~$C>0$ such that for all $\ell, n \in \N_0$, the error of the multilevel Monte Carlo approximation is bounded by
\begin{equation*}
\max_{k=1,\ldots,N(n_L)} \| \mathbb{E}[\varphi(X(t_k^{n_L}))] - E^L[\varphi(X_{L,n_L}(t_k^{n_L}))]\|_{L^2(\Omega;\R)}
 \leq 
C h_L^{2\gamma},
\end{equation*}
where $n_L$ is chosen according to the coupling with~$L$.
If the work of one computation in space is bounded by $\mathcal{W}_\ell^H = \operatorname{O}(h_\ell^{-d})$ for $\ell=0,\ldots,L$ and fixed $d \ge 0$, which includes the summation of different levels, the overall work will be bounded by
 \begin{equation*}
  \mathcal{W}_L = \operatorname{O}(h_L^{-(d+2+2\gamma)}L^{2+\epsilon})
  .
 \end{equation*}
\end{corollary}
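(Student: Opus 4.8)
The plan is to invoke Theorem~\ref{thm:MLMC} with $B=\R$, $Y=\varphi(X(t_k^{n_L}))$, $Y_\ell=\varphi(X_{\ell,n_\ell}(t_k^{n_\ell}))$, and the parameters $a_\ell:=h_\ell^{2\gamma}$, $\eta:=1/2$. This is the same scheme as in the proof of Corollary~\ref{cor:MLMC-strong-conv}; the only difference is that the bias slot of Theorem~\ref{thm:MLMC} is now fed by the weak rate of Theorem~\ref{thm:weak-conv} rather than by the strong rate of Theorem~\ref{thm:strong-conv}. Conceptually, the weak rate makes the sequence $a_\ell$ decay at order $h_\ell^{2\gamma}$, i.e.\ twice as fast as in the strong-rate corollary, whereas no improved estimate on the variance of the level differences is available, so that variance is still only of order $h_\ell^{2\gamma}=a_\ell$ and hence $\eta$ must be taken as $1/2$ instead of $1$.

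To verify the hypotheses of Theorem~\ref{thm:MLMC} I would bound the bias $\|\mathbb{E}[Y-Y_\ell]\|_B=|\mathbb{E}[\varphi(X(t_k^{n_\ell}))-\varphi(X_{\ell,n_\ell}(t_k^{n_\ell}))]|$ by Theorem~\ref{thm:weak-conv} and the coupling $\Delta t^{n_\ell}\simeq h_\ell^2$, giving the bound $C(h_\ell^{2\gamma}+(\Delta t^{n_\ell})^\gamma)\lesssim h_\ell^{2\gamma}=a_\ell$, so $C_1$ may be taken as this constant. For the variance I would reuse the estimate already derived inside the proof of Corollary~\ref{cor:MLMC-strong-conv}, namely ${\mathsf{Var}}[Y_\ell-Y_{\ell-1}]\le C_2 h_\ell^{2\gamma}=C_2 a_\ell^{2\eta}$ and ${\mathsf{Var}}[Y_0]=C_3$, which follows from Lemma~\ref{lem:MLMCerr}, the Lipschitz behaviour of $\varphi$ on the set of uniformly $L^p$-bounded iterates, Theorem~\ref{thm:strong-conv}, and the coupling; the required square-integrability of $Y$ and the $Y_\ell$ comes from Assumption~\ref{ass:functional} together with the uniform moment bounds on $X$ and its approximations. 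With the prescribed sample sizes $N_0\simeq\lceil a_L^{-2}\rceil=\lceil h_L^{-4\gamma}\rceil$ and $N_\ell\simeq\lceil a_L^{-2}a_\ell^{2\eta}\ell^{1+\epsilon}\rceil=\lceil h_L^{-4\gamma}h_\ell^{2\gamma}\ell^{1+\epsilon}\rceil$, Theorem~\ref{thm:MLMC} yields $\|\mathbb{E}[Y]-E^L[Y_L]\|_{L^2(\Omega;\R)}\le C a_L=C h_L^{2\gamma}$ for each fixed $k$, and taking the maximum over $k=1,\ldots,N(n_L)$ gives the first assertion.

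For the work bound I would read off the exponent $\kappa$ of Theorem~\ref{thm:MLMC}: one realization of $Y_\ell-Y_{\ell-1}$ along a shared noise path costs $\mathcal{W}_\ell^H\cdot\mathcal{W}_\ell^T=\operatorname{O}(h_\ell^{-d}(\Delta t^{n_\ell})^{-1})=\operatorname{O}(h_\ell^{-(d+2)})=\operatorname{O}(a_\ell^{-(d+2)/(2\gamma)})$, so $\kappa=(d+2)/(2\gamma)$. Since $\gamma<\beta\le 1$ forces $2\gamma<2\le d+2$, we land in the case $\kappa>1=2\eta$; moreover $a_\ell=h_\ell^{2\gamma}$ decreases polynomially (geometrically in $\ell$), and by the remark following Theorem~\ref{thm:MLMC} the summation of the estimators over the nested subspaces $(V_\ell)$ adds no further complexity, so one may take $\delta=0$. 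The second branch of the work estimate in Theorem~\ref{thm:MLMC} then gives $\mathcal{W}_L=\operatorname{O}(a_L^{-(2+\kappa-2\eta)}L^{2+\epsilon})$, and substituting $\kappa=(d+2)/(2\gamma)$, $\eta=1/2$, $a_L=h_L^{2\gamma}$ the exponent equals $2\gamma\bigl(2+\tfrac{d+2}{2\gamma}-1\bigr)=d+2+2\gamma$, i.e.\ $\mathcal{W}_L=\operatorname{O}(h_L^{-(d+2+2\gamma)}L^{2+\epsilon})$, which is the second assertion.

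There is no genuine obstacle, the corollary being a direct specialization of Theorem~\ref{thm:MLMC}; the one point that must be handled with care is the allocation of the two convergence rates to the two slots of that theorem. Routing the weak rate into the bias slot changes $a_\ell$ from $h_\ell^\gamma$ to $h_\ell^{2\gamma}$, but because the variance of the level differences cannot be sharpened beyond what strong convergence delivers, its exponent relative to the new $a_\ell$ drops from $2$ to $1$, i.e.\ $\eta$ drops from $1$ to $1/2$. This single bookkeeping change is what propagates to the enlarged sample sizes $N_\ell\simeq h_L^{-4\gamma}h_\ell^{2\gamma}\ell^{1+\epsilon}$ and, through $\kappa$, to the complexity recorded in the statement.
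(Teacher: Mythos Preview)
Your proposal is correct and follows essentially the same approach as the paper: both proofs reduce to Corollary~\ref{cor:MLMC-strong-conv} with the single modification that the bias term is now controlled by Theorem~\ref{thm:weak-conv}, leading to the choices $a_\ell=h_\ell^{2\gamma}$ and $\eta=1/2$ in Theorem~\ref{thm:MLMC}. Your write-up is in fact more detailed than the paper's, spelling out explicitly the identification $\kappa=(d+2)/(2\gamma)$ and the verification that $\kappa>2\eta$, which the paper leaves implicit.
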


\begin{proof}
 The proof is the same as for Corollary~\ref{cor:MLMC-strong-conv} except that we obtain
 \begin{equation*}
      \max_{k=1,\ldots,N(n_L)} | \mathbb{E}[\varphi(X(t_k^{n_L}))] - \mathbb{E}[\varphi(X_{L,n_L}(t_k^{n_L}))]|
      \leq C h_L^{2\gamma}
 \end{equation*}
 directly from Theorem~\ref{thm:weak-conv} and therefore set $a_\ell = h_\ell^{2\gamma}$, $\eta = 1/2$, and the sample numbers according to these choices in Theorem~\ref{thm:MLMC}.
 \qed
\end{proof}

If we take regular subdivisions of the grids, i.e., we set, up to a constant, $h_\ell := 2^{-\ell}$ for $\ell \in \N_0$ and rescale both corollaries such that the convergence rates are the same, i.e., the errors are bounded by $\operatorname{O}(h_\ell^{2\gamma})$, we obtain that for a given accuracy~$\epsilon_L$ on level~$L \in \N$, Corollary~\ref{cor:MLMC-strong-conv} leads to computational work 
\begin{equation*}
 \mathcal{W}_L 
    = \operatorname{O}\left(2^{2+\epsilon}\frac{2+\epsilon}{2\gamma} 
	\epsilon_L^{-(d+2)/\gamma}|\log_2 \epsilon_L|\right)
\end{equation*}
while the estimators in Corollary~\ref{cor:MLMC-weak-conv} can be computed in 
\begin{equation*}
 \mathcal{W}_L 
    = \operatorname{O}\left(\frac{2+\epsilon}{2\gamma}
	\epsilon_L^{-((d+2)/(2\gamma)+1)}|\log_2 \epsilon_L|\right).
\end{equation*}
Therefore the availability of weak convergence rates implies a reduction of the computational complexity of the multilevel Monte Carlo estimator which depends on the regularity~$\gamma$ and~$d$ referring to the dimension of the problem in space. For large~$d$, the work using strong convergence rates is essentially the squared work that is needed with the knowledge of weak rates. Additionally, for all $d\ge 0$, the rates are better and especially in dimension $d=1$ we obtain $\epsilon_L^{3/(2\gamma)+1}$ for the weak rates versus $\epsilon_L^{3/\gamma}$, where $\gamma \in (0,1)$. Nevertheless, one should also mention that Corollary~\ref{cor:MLMC-strong-conv} already reduces the work for $4\gamma > d+2$ compared to a (singlelevel) Monte Carlo approximation according to weak convergence rates. The results are put together in Table~\ref{tab:WorkMCvsMLMC} for a quick overview.
\begin{table}
\centering
\begin{tabular}{c|c|c|c}
 & Monte Carlo & MLMC with strong conv.\ & MLMC with weak conv.\\
 \hline
 general
 & $\epsilon_L^{-((d+2)/(2\gamma) + 2)}$
 & $2^{2+\epsilon}\frac{2+\epsilon}{2\gamma} \epsilon_L^{-(d+2)/\gamma}|\log_2 \epsilon_L|$
 & $\frac{2+\epsilon}{2\gamma} \epsilon_L^{-((d+2)/(2\gamma)+1)}|\log_2 \epsilon_L|$\\
 \hline
 $\gamma = 1$, omitting const.
 & $\epsilon_L^{-(d/2 + 3)}$
 & $\epsilon_L^{-(d+2)}|\log_2 \epsilon_L|$
 & $\epsilon_L^{-(d/2 + 2)}|\log_2 \epsilon_L|$\\
\end{tabular}
\caption{Computational work of different Monte Carlo type approximations for a given precision $\epsilon_L$.\label{tab:WorkMCvsMLMC}}
\end{table}

\section{Simulation}\label{sec:Simulations}

In this section simulation results of the theory of Section~\ref{sec:SPDE-MLMC} are shown, where it has to be admitted that the chosen example fits better the framework of~\cite{BLS13} since we estimate the expectation of the solution instead of the expectation of a functional of the solution. Simulations that fit the conditions of Section~\ref{sec:SPDE-MLMC} are under investigation.
Here we simulate similarly to~\cite{BL12} and~\cite{BL13} the heat equation driven by additive Wiener noise 
\begin{equation*}
 \D X(t) = \Delta X(t) \, \D t + \D W(t)
\end{equation*}
on the space interval $(0,1)$ and the time interval $[0,1]$ with initial condition $X(0,x) = \sin(\pi x)$ for $x \in (0,1)$. In contrast to previous simulations, the noise is assumed to be white in space to reduce the strong convergence rate of the scheme to (essentially)~$1/2$.
The solution to the corresponding deterministic system with $u(t) = \mathbb{E}[X(t)]$ for $t \in [0,1]$
\begin{equation*}
 \D u(t) = \Delta u(t) \, \D t
\end{equation*}
is in this case $u(t,x)= \exp(-\pi^2t)\sin(\pi x)$ for $x \in (0,1)$ and $t \in [0,1]$. 

The space discretization is done with a finite element method and the hat function basis, i.e., with the spaces $(\mathcal{S}_h, h > 0)$ of piecewise linear, continuous polynomials (see, e.g., \cite[Example~3.1]{BLS13}). 
The numbers of multilevel Monte Carlo samples are calculated according to Corollary~\ref{cor:MLMC-strong-conv} and Corollary~\ref{cor:MLMC-weak-conv} with $\epsilon = 1$ to compare the convergence and complexity properties with and without the availability of weak convergence rates.
\begin{figure}
     \centering
     \includegraphics[width=0.5\textwidth]{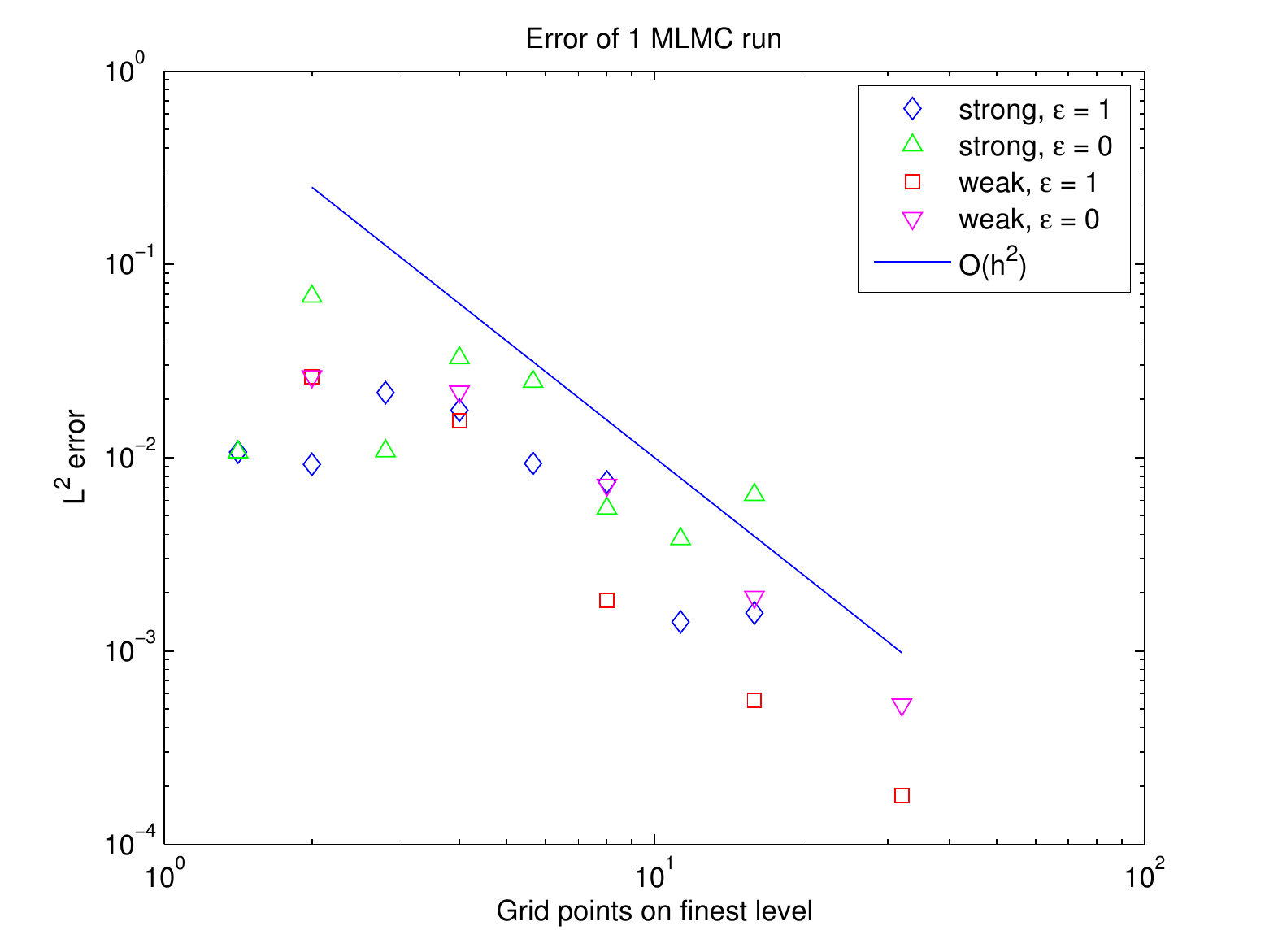}
     \includegraphics[width=0.5\textwidth]{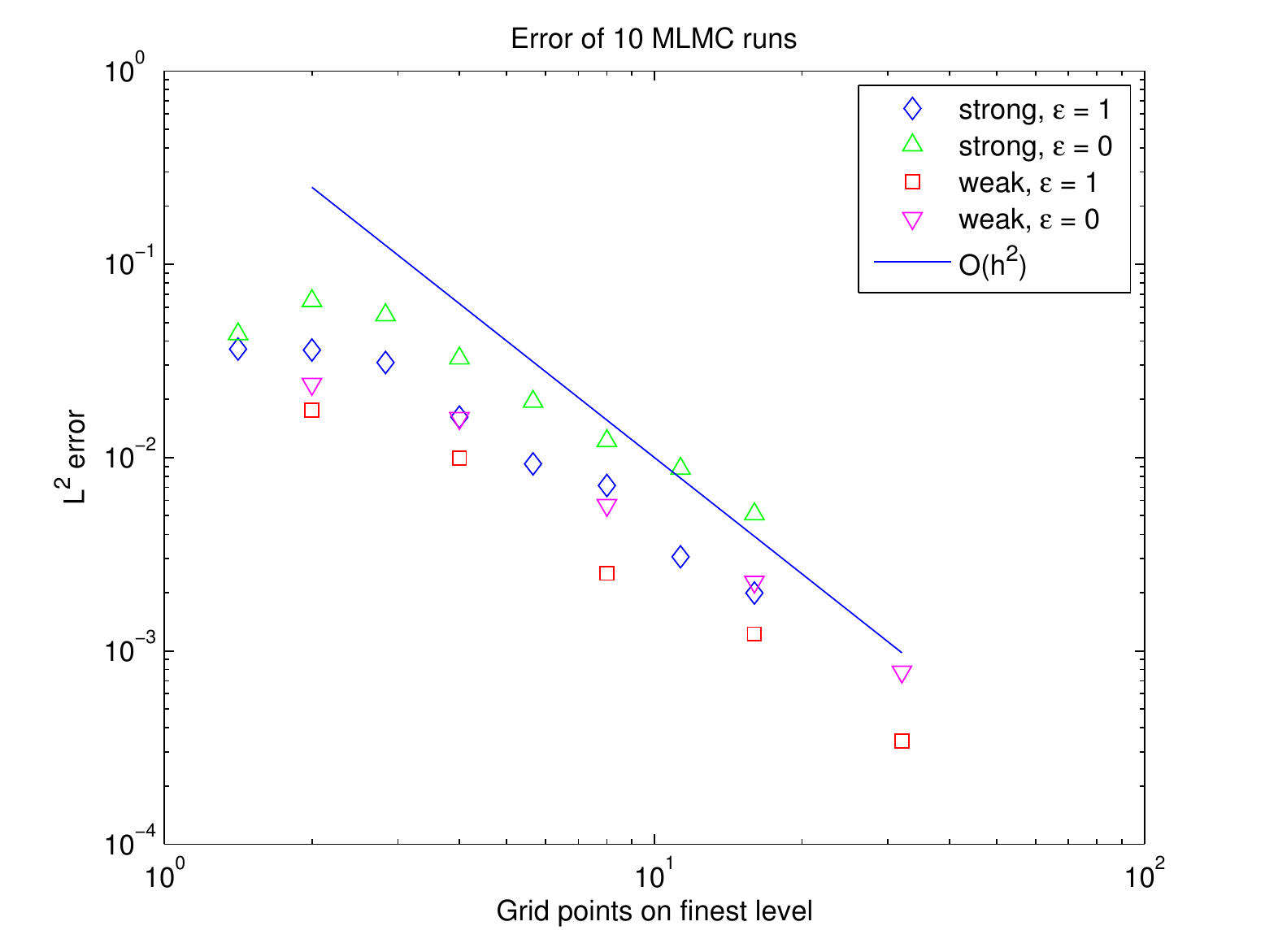}
    \caption{Mean square error of the multilevel Monte Carlo estimator with samples chosen according to Corollary~\ref{cor:MLMC-strong-conv} and Corollary~\ref{cor:MLMC-weak-conv}.\label{fig:MLMC_error}}
\end{figure}
In the left graph in Figure~\ref{fig:MLMC_error}, the multilevel Monte Carlo estimator $E^L[X_{L,2L}(1)]$ was calculated for $L = 1,\ldots,5$ for available weak convergence rates as in Corollary~\ref{cor:MLMC-weak-conv} while just for $L = 1,\ldots,4$ in the other case to finish the simulations in a reasonable time on an ordinary laptop. The plot shows the approximation of 
\begin{equation*}
 \| \mathbb{E}[X(1)] - E^L[X_{L,2L}(1)]\|_H
    = \Bigl(\int_0^{1} (\exp(-\pi^2)\sin(\pi x) - E^L[X_{L,2L}(1,x)])^2 \, dx\Bigr)^{1/2},
\end{equation*}
i.e.,
\begin{equation*}
  e_1(X_{L,2L}) := 
  \Bigl(\frac{1}{m} \sum_{k=1}^m (\exp(-\pi^2)\sin(\pi x_k) - E^L[X_{L,2L}(1,x_k)] )^2\Bigr)^{1/2}.
\end{equation*}
Here, for all levels $L=1,\ldots,5$, $m=2^5+1$ and $x_k$, $k=1,\ldots,m$, are the nodal points of the finest discretization, i.e., on level $5$ respectively $4$. The multilevel Monte Carlo estimator $E^L[X_{L,2L}]$ is calculated at these points by its basis representation for $L=1,\ldots,4$, which is equal to the linear interpolation to all grid points~$x_k$, $k=1,\ldots,m$.
One observes the convergence of one multilevel Monte Carlo estimator, i.e., the almost sure convergence of the method, which can be shown using the mean square convergence and the Borel--Cantelli lemma. In the graph on the right hand side of Figure~\ref{fig:MLMC_error}, the error is estimated by
\begin{equation*}
 e_N(X_{L,2L})
    := \Bigl( \frac{1}{N} \sum_{i=1}^N e_1(X_{L,2L}^i)^2 \Bigr)^{1/2},
\end{equation*}
where $(X_{L,2L}^i, i =1,\ldots,N)$ is a sequence of independent, identically distributed samples of~$X_{L,2L}$ and $N=10$. The simulation results confirm the theory.
\begin{figure}
  \centering
  \includegraphics[width=0.8\textwidth]{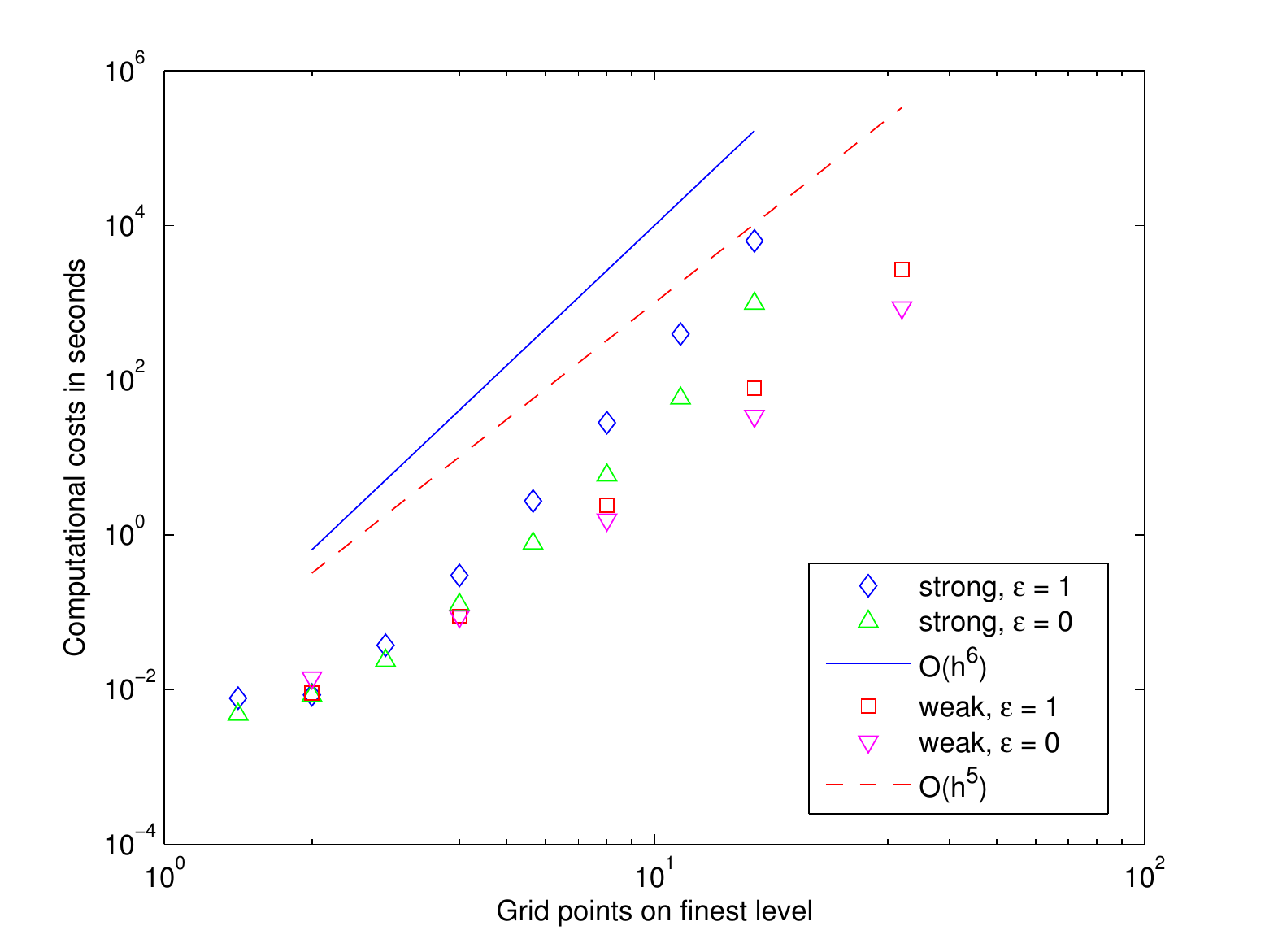}
  \caption{Computational work of the multilevel Monte Carlo estimator with samples chosen according to Corollary~\ref{cor:MLMC-strong-conv} and Corollary~\ref{cor:MLMC-weak-conv}.\label{fig:compWork}}
\end{figure}
In Figure~\ref{fig:compWork} the computational costs per level of the simulations on a laptop using matlab are shown for both frameworks. It is obvious that the computations using weak convergence rates are substantially faster. One observes especially that the computations with weak rates on level~$5$ take less time than the ones with strong rates on level~$4$. The computing times match the bounds of the computational work that were obtained in Corollary~\ref{cor:MLMC-weak-conv} and Corollary~\ref{cor:MLMC-strong-conv}.

Finally, Figure~\ref{fig:MLMC_error} and Figure~\ref{fig:compWork} include besides $\epsilon = 1$ also simulation results for the border case $\epsilon = 0$ in the choices of sample sizes per level. One observes in the left graph in Figure~\ref{fig:MLMC_error} that the variance of the errors for $\epsilon = 0$ in combination with Corollary~\ref{cor:MLMC-strong-conv} is high, which is visible in the nonalignment of the single simulation results. Furthermore the combination of Figure~\ref{fig:MLMC_error} and Figure~\ref{fig:compWork} shows that $\epsilon = 0$ combined with Corollary~\ref{cor:MLMC-weak-conv} and $\epsilon = 1$ with Corollary~\ref{cor:MLMC-strong-conv} lead to similar errors, but that the first choice of sample sizes is essentially less expensive in terms of computational complexity. Therefore the border case $\epsilon = 0$, which is not included in the theory, might be worth to consider in practice.

\begin{acknowledgement}
This research was supported in part by the Knut and Alice Wallenberg foundation as well as the Swedish Research Council under Reg.~No.~621-2014-3995. The author thanks Lukas Herrmann, Andreas Petersson, and two anonymous referees for helpful comments.
\end{acknowledgement}


\bibliographystyle{spmpsci}
\bibliography{MCQMC14}
\end{document}